\newfont{\cursive}{eusm10}
\newtheorem{theorem}{Theorem}[section]
\newtheorem{proposition}[theorem]{Proposition}
\newtheorem{lemma}[theorem]{Lemma}
\newtheorem{corollary}[theorem]{Corollary}
\theoremstyle{definition}
\newtheorem{definition}[theorem]{Definition}
\newtheorem{problem}{Problem}
\theoremstyle{remark}
\newtheorem{remark}[theorem]{Remark}
\numberwithin{equation}{section}
\newcounter{thlistctr}
\newenvironment{thlist}{\ 
\begin{list}%
{\alph{thlistctr}}%
{\setlength{\labelwidth}{2ex}%
\setlength{\labelsep}{1ex}%
\setlength{\leftmargin}{6ex}%
\usecounter{thlistctr}}}%
{\end{list}}
\newcommand{\whbQ}{\widehat{\bf Q}}
\newcommand{\whq}{\widehat{q}}
\newcommand{\bbN}{{\mathbb N}}
\newcommand{\bbQ}{{\mathbb Q}}
\newcommand{\bbR}{{\mathbb R}}
\newcommand{\sE}{{\sf E}}
\newcommand{\fG}{{\mathfrak G}}
\newcommand{\fQ}{{\mathfrak Q}}
\newcommand{\fS}{{\mathfrak S}}
\newcommand{ \fa }{{\mathfrak a }}
\newcommand{ \fb }{{\mathfrak b }}
\newcommand{ \fc }{{\mathfrak c }}
\newcommand{ \fd }{{\mathfrak d }}
\newcommand{ \fk }{{\mathfrak k }}
\newcommand{\bA}{{\bf A}}
\newcommand{\bB}{{\bf B}}
\newcommand{\bC}{{\bf C}}
\newcommand{\bF}{{\bf F}}
\newcommand{\bP}{{\bf P}}
\newcommand{\bQ}{{\bf Q}}
\newcommand{\bR}{{\bf R}}
\newcommand{\bS}{{\bf S}}
\newcommand{\bT}{{\bf T}}
\newcommand{\bU}{{\bf U}}
\newcommand{\bp}{{\bf p}}
\newcommand{\ro}{{\rm o}}
\newcommand{\cA}{{\mathcal A}}
\newcommand{\cB}{{\mathcal B}}
\newcommand{\cF}{{\mathcal F}}
\newcommand{\cD}{{\mathcal D}}
\newcommand{\cL}{{\mathcal L}}
\newcommand{\cM}{{\mathcal M}}
\newcommand{\cP}{{\mathcal P}}
\newcommand{\cT}{{\mathcal T}}
\newcommand{\cX}{{\mathcal X}}
\newcommand{\cU}{{\mathcal U}}
\newcommand{\exist}{\exists}
\newcommand{\univ}{\forall}
\newcommand {\lle}{<}
\renewcommand{\mod}{{\sf mod\ }}
\newcommand{\MSO}{{\rm MSO}}
\newcommand{\RT}{{\sf RT}}
\newcommand{\RTOS}{{{\sf RT}_1}^\star}
\newcommand{\RTN}{{\sf RTN}}
\newcommand{\RTNOS}{{{\sf RTN}_1}^\star}
\newcommand{\bigexpp}[1]{\exp\big(#1\big)}
\newcommand{\fq}{\mathfrak{q}}
\newcommand{\fn}{\mathfrak{n}}
\newcommand{\MODULES}{{\sf MODULES}}
\title[MSO 0--1 Laws]
{Monadic Second-Order Classes of Forests with a Monadic Second-Order  0--1 Law}
\author{Jason Bell, Stanley Burris, and Karen Yeats}
\date{\today}    
\begin{document}

\begin{abstract}
Let $\cT$ be a monadic-second order class of finite trees, and let $\bT(x)$ 
be its (ordinary) generating function, with radius of convergence $\rho$.
If $\rho \ge 1$ then $\cT$ has an explicit specification (without using 
recursion) in terms of the operations of union, sum, stack, and the
multiset operators $(n)$ and $(\ge n)$. 
Using this, one has an explicit expression for $\bT(x)$ in terms of the initial 
functions $x$ and $x\cdot \big(1-x^n\big)^{-1}$, the operations of addition and 
multiplication, and the P\'olya exponentiation operators $\sE_n, \sE_{\ge n}$.

Let $\cF$ be a monadic-second order class of finite forests, and let 
$\bF(x)=\sum_n f(n) x^n$ be its (ordinary) generating function.
Suppose $\cF$ is closed under extraction of component trees and sums of
forests.  
Using the above-mentioned structure theory for the class $\cT$ of trees in $\cF$,
Compton's theory of 0--1 laws, and a significantly strengthened version of 2003 
results of Bell and Burris on generating functions,  we show that  $\cF$ has a 
monadic second-order 0--1 law iff the radius of convergence of $\bF(x)$ is 1 iff the
 radius of convergence of $\bT(x)$ is $\ge 1$.
\end{abstract}

\maketitle

\section{Introduction}

In the late 1980s, Compton (\cite{Compton1987}, \cite{Compton1989}) 
introduced a new method to show that a class $\cA$ of finite relational 
structures has a monadic second-order (MSO) 0--1 law,\footnote
{See the Appendix for a discussion of monadic second-order logic.
Given a class $\cA$ and a logic $\cL$, we say ``$\cA$ has a $\cL$ 0--1 law'' if, 
for any $\cL$-sentence $\varphi$,  the class $\cA_\varphi$, of structures in $\cA$ 
for which $\varphi$ is true, has asymptotic density 0 or 1 in $\cA$. 
 (See $\S$\ref{sec 4.1}.)}
 a method that depended only on a property of the generating function 
 $\bA(x)$ for  $\cA$, and not on the nature of the structures in the class. 
 The pre-condition imposed on the class $\cA$ was that it be closed under 
 the extraction of components and sums of its members---we say such a 
 class is {\em adequate} (or {\em Compton-admissible}). 
 Compton analyzed both labelled and unlabelled classes---this paper concerns
  {\em unlabelled} classes $\cA$ and their associated {\em ordinary}
  generating functions $\bA(x) = \sum a(n) x^n$.
 
 \begin{theorem}[Compton] \label{Compton's thm}
Let $d$ be the gcd of the sizes of the members of an adequate class $\cA$ of 
relational structures, and let $\rho_\bA$ be the radius of convergence of the 
generating function $\bA(x)$ of $\cA$.  
\begin{thlist}
\item
If $\cA$ has a first-order 0--1 law then $\rho_\bA \in \{0,1\}$.
\item
If $\rho_\bA > 0$, then $\cA$ has a monadic second-order 0--1 law iff it has a 
first-order 0--1 law iff 
\begin{equation}  \label{Compton's test}
	 \lim_{n\rightarrow \infty} \frac{a\big((n-1)d\big)}{a(nd)} \ =\  1.
 \end{equation}
 \end{thlist}
 \end{theorem}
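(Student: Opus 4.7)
The plan is to exploit the multiset structure of adequate classes. Since $\cA$ is closed under extraction of components and sums, each member of $\cA$ is a finite multiset of members of the connected subclass $\cC$, so the generating function factors as the Euler product
$$\bA(x) \;=\; \prod_{n \ge 1} (1 - x^n)^{-c(n)},$$
where $c(n)$ counts connected members of size $n$. The analysis of $a(n)$ then reduces to a study of this product together with its residue behaviour modulo $d$.

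For part (a), I would first rule out $\rho_\bA > 1$: in that range $\sum a(n)<\infty$, which for non-negative integers forces $a(n)=0$ eventually; but adequacy produces nontrivial classes with structures of arbitrarily many sizes, so this case is impossible. To exclude $0 < \rho_\bA < 1$ I would combine two observations. A Darboux/singularity analysis of the Euler product gives $a(n-d)/a(n)\to \rho_\bA^d$, which is strictly less than $1$. On the other hand, a first-order 0--1 law forces $\lim_n a((n-1)d)/a(nd)=1$: for each $k$ the sentences $\varphi_{k,r}$ expressing ``the universe has cardinality $\equiv r\pmod{kd}$'' are first-order expressible, and the 0--1 law restricts the asymptotic densities of the subclasses they cut out; cross-comparing these densities pins down the ratio limit and contradicts $\rho_\bA^d<1$.

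For part (b), the implication MSO 0--1 law $\Rightarrow$ FO 0--1 law is immediate. The implication FO 0--1 law $\Rightarrow$ Compton's test is exactly the argument just sketched. The heart of the theorem is Compton's test $\Rightarrow$ MSO 0--1 law, and my approach is in two stages. Stage (i) is a Feferman--Vaught--style reduction showing that every MSO sentence $\psi$ is equivalent, over the multiset structures in $\cA$, to a Boolean combination of statements about the multiplicities (modulo some fixed modulus $m_\psi$) of the finitely many MSO $k$-types of connected components, where $k$ is the quantifier rank of $\psi$. Stage (ii) is a Tauberian argument using Compton's test to show that each such Boolean combination has density $0$ or $1$ in $\cA$: one checks that the ratio condition $a((n-1)d)/a(nd)\to 1$ is preserved under the generating-function operations obtained by fixing finitely many component-type counts and imposing congruence conditions on the remaining multiplicities.

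The main obstacle will be stage (i). Feferman--Vaught gives a compositional decomposition for MSO under disjoint unions, but to extract a 0--1 law one must further verify that at each quantifier rank only finitely many ``MSO-types of connected components'' are relevant, and that MSO sentences see these types only through bounded Presburger data about their multiplicities. Once (i) is in place, stage (ii) is a controlled application of Tauberian-type results of Bell--Burris in the strengthened form announced in the abstract.
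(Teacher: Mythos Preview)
First, note that the paper does not actually prove this theorem: it is stated as Compton's result and attributed to \cite{Compton1987}, \cite{Compton1989}. There is no proof in the paper to compare against, so your proposal should be evaluated on its own merits as a reconstruction of Compton's argument.

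Your proposal contains several genuine gaps. The most serious is in part~(a): the sentences $\varphi_{k,r}$ asserting ``the universe has cardinality $\equiv r \pmod{kd}$'' are \emph{not} first-order expressible over relational structures. An Ehrenfeucht--Fra\"iss\'e argument shows that for any fixed quantifier rank, all sufficiently large multiples $nC$ of a single component $C$ are indistinguishable, so no first-order sentence can detect the cardinality modulo anything. Compton's actual route to part~(a) uses instead sentences of the form ``there is a component isomorphic to $C$'' for a fixed finite $C$, which \emph{are} first-order (one asserts the existence of $|C|$ elements with the correct induced relations and no relations to anything outside), and analyzes their densities via the factor $(1-x^{|C|})$ in the Euler product. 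Your appeal to Darboux/singularity analysis to conclude $a(n-d)/a(n)\to\rho_\bA^d$ is also unjustified: the Euler product $\prod_n(1-x^n)^{-c(n)}$ with arbitrary $c(n)\in\bbN$ need not have the algebraic--logarithmic singularity structure that Darboux's method requires, and the ratio need not converge at all a~priori.

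For part~(b), your Feferman--Vaught reduction in stage~(i) is the right idea and is essentially Compton's approach. But your stage~(ii) is confused about the logical dependencies: you invoke ``Tauberian-type results of Bell--Burris in the strengthened form announced in the abstract,'' yet those results (including the present paper's strengthening) are developed precisely in order to \emph{verify} Compton's test \eqref{Compton's test} for particular classes, taking Compton's theorem as already established. Using them to prove Compton's theorem would be circular. Compton's own argument for the implication from \eqref{Compton's test} to the MSO 0--1 law is a direct density computation once the Feferman--Vaught decomposition reduces MSO sentences to Boolean combinations of component-multiplicity conditions; no Bell--Burris machinery is needed there.
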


This paper is about adequate classes of forests---for classes $\cF$ of
forests, the radius of convergence $\rho_\bF$ of the generating function 
$\bF(x) = \sum_n f(n)x^n$ is always positive. Thus Compton's Theorem
on 0--1 laws is slightly simpler in the case of forests.

\begin{corollary}\label{forest thm}
Given an adequate class $\cF$ of forests, let $d$ be the gcd of the sizes of the 
forests in $\cF$. Then $\cF$ has a monadic second-order 0--1 law iff it has a 
first-order 0--1 law iff 
\begin{equation}\nonumber
	 \lim_{n\rightarrow \infty} \frac{f\big((n-1)d\big)}{f(nd)} \ =\  1.
 \end{equation}
\end{corollary}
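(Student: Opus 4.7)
The plan is to observe that Corollary~\ref{forest thm} is simply the specialization of Theorem~\ref{Compton's thm}(b) to the forest setting. The only hypothesis of part~(b) that is not automatic is $\rho_\bA > 0$, so the entire task reduces to showing that the radius of convergence $\rho_\bF$ is strictly positive for every class $\cF$ of unlabeled forests. Once this is in hand, the two equivalences---MSO 0--1 law iff FO 0--1 law iff the ratio-test condition---fall out of Theorem~\ref{Compton's thm}(b) with no further work, since part (a) guarantees that when an FO 0--1 law holds one in fact has $\rho_\bF=1$, and the ratio-test condition itself forces $\rho_\bF = 1$ by a standard application of the ratio test to $\sum f(nd)x^{nd}$.

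To establish positivity of $\rho_\bF$, I would invoke the classical P\'olya--Otter asymptotics for unlabeled trees: the number of unlabeled (rooted, or unrooted) trees on $n$ vertices grows like $C\alpha^n/n^{3/2}$ for an explicit constant $\alpha$, so the OGF for the class of \emph{all} unlabeled trees has positive radius of convergence. The OGF for \emph{all} unlabeled forests is obtained from this one via the P\'olya/multiset construction and therefore also has positive radius of convergence. For an arbitrary class $\cF$ of forests one then has the trivial coefficient comparison, namely that $f(n)$ is bounded above by the total number of unlabeled forests on $n$ vertices, which forces $\rho_\bF$ to be at least as large as this positive number.

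There is essentially no substantive obstacle in this argument: the real content has been packaged into Theorem~\ref{Compton's thm}, and the only combinatorial input required is the well-known exponential upper bound on the number of unlabeled forests. The corollary should therefore follow in a few lines, and the work of the rest of the paper lies not in this reduction but in using the structure theory of $\cT$ (alluded to in the abstract) to \emph{verify} the ratio-test condition via the shape of $\bT(x)$ and $\bF(x)$.
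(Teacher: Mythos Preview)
Your proposal is correct and matches the paper's approach exactly: the paper states (without separate proof) that for classes of forests the radius of convergence is always positive, and then observes that Corollary~\ref{forest thm} is just Theorem~\ref{Compton's thm}(b) specialized to this situation. Your added justification via the P\'olya--Otter bound on the number of unlabelled trees is a fine way to make explicit what the paper simply asserts; the remark about part~(a) forcing $\rho_\bF=1$ is unnecessary for the stated corollary but harmless.
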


Common examples of adequate classes of forests are usually MSO-classes, 
that is, they are defined by a MSO-sentence. 
For example, Compton applied his theorem to two adequate classes of forests, 
namely to (1) forests of trees of height 1, and (2) forests of linear trees---these
are clearly MSO classes.
 An adequate class $\cF$ of forests is determined by its subclass $\cT$ of trees, 
 and  $\cF$ is a MSO-class iff $\cT$ is a MSO-class. MSO-classes of trees include 
 most of the basic examples of classes of trees one finds in the literature, for example, trees  of bounded height, chains, trees of bounded width,  binary trees, etc.
  
 Although condition \eqref{Compton's test} is exceedingly simple to state, it can be 
 challenging to verify that it actually holds for a given adequate class $\cA$. 
 Most of the practical success in this direction has been in finding conditions on the 
 generating function $\bP(x) = \sum_n p(n)x^n$ of the class $\cP$ of components of 
 $\cA$, conditions which ensure $\cA$ satisfies \eqref{Compton's test}. 
 Notable results are: (1) Bell's theorem (\cite{Bell2002}, 2002), which says that 
 polynomially bounded growth of  the component count function $p(n)$ is 
 sufficient,\footnote
  {An analog of Bell's polynomial bound theorem was proved for labelled structures 
  in  2008 by Burris and Yeats \cite{BuYe2008}.}
and (2) Bell and Burris's  theorem (\cite{BeBu2003}, 2003), which says that
 $p\big((n-1)d\big)/p(nd) \rightarrow 1$ is sufficient. 
 Although these results have greatly increased the scope of application of Compton's
 theory,  finding a comprehensive practical approach to verifying
 \eqref{Compton's test} remains a vexing problem.
 
This paper provides a transparent description of when an 
adequate MSO-class $\cF$ of forests has a MSO 0--1 law. 
Let $\cT$ be the class of trees in $\cF$. 
Then  {\em $\cF$ has a MSO 0--1 law iff $\rho_\bF=1$ iff $\rho_\bT\ge 1$ iff $\cT$
 has an explicit specification in terms of four natural operations on classes of trees.}

 The proof involves three key steps:
  First a structure theorem is established which shows that a MSO-class of trees  
  $\cT$ with $\rho_\bT \ge 1$ has an explicit (non-recursive) specification.
 Secondly, using this result, a description of the generating function $\bT(x)$ of 
 $\cT$ is determined.
Thirdly, the arguments used in the above-mentioned \cite{BeBu2003} are 
 thoroughly reworked  to cover the generating functions described in the second step.

\section{Preliminaries}
 The \textit{radius} of a class of forests is the radius of convergence of its ordinary 
 generating function. 
 A class of forests must have its radius in $[0,1]\cup\{\infty\}$ since the coefficients 
 of the generating function are nonnegative integers. 
 The classes with radius infinity are precisely the finite classes (whose generating 
 functions are polynomials). 
Compton's test for a MSO 0--1 law for a class $\cF$ of forests requires the radius of 
the class to equal $1$, and thus the radius of the class $\cT$ of trees in $\cF$ must 
be $\ge 1$.

Forests of rooted trees occur in two basic incarnations, namely as acyclic graphs
and posets. 
 The choice of fundamental language, whether that of graphs (with a binary edge 
 relation $E$ and a unary root relation $R$), or that of posets (with a less than 
 relation $<$), is not significant---being definable by a MSO sentence is a robust 
 concept that is not affected by the choice of the basic relation(s).  
In this paper forests are posets $(F,<)$.

Capital boldface letters will be used for power series, and the corresponding
lowercase letters for the coefficients. For example,
$$
	\bA(x)\ =\ \sum_{n=0}^\infty a(n)x^n. 
$$
Given a class $\cT$ of trees, $\bT(x):=\sum_{n\ge 1} t(n)x^n$ is used for its generating
function, where $t(n)$ counts the number of trees of size $n$ in $\cT$.
Likewise $\bF(x):= \sum_{n\ge 1}f(n)x^n$ is the generating function for a class $\cF$ 
of forests.
It will be usual to abbreviate a forest $(F,\lle$) simply as $F$; and likewise a tree 
$(T,\lle)$ as $T$. 
The one-element tree is $\bullet$;  it is also the only one-element forest.

\section{Four Class Constructions} \label{constr sec}
This section describes the four natural constructions---union, sum, multiset 
and stack---that will be used to give a transparent description of a 
MSO-class of trees of radius $\ge 1$.
 Namely such a class is a composition of these constructions applied to the initial 
 object, which is the singleton class of the one-element tree.
The first three constructions, \emph{union}, \emph{sum} and 
\emph{multiset}, are well-known and can be used with any class 
of purely relational structures (such as graphs, posets, etc.).

\subsection{The Union Construction} \label{union constr}

Given classes $\cF_1,\ldots,\cF_m$ of forests, the \emph{union} operation 
$\bigcup_i \cF_i$ is just as one would expect:
$$
	\bigcup_{i=1}^m \cF_i\ =\ \big\{F:F\in \cF_i\ \text{for some } i\big\}. 
$$
If the classes $\cF_1,\ldots,\cF_m$ are pairwise disjoint then 
$$
	\bF(x)\ =\ \sum_{i=1}^m \bF_i(x). 
$$
%

%
\subsection{The Sum Construction} \label{sum constr}
Given trees $T_1,\ldots,T_m$,  the \emph{sum} operation 
$$
	\sum_{i=1}^m T_i,\quad \text{or}\quad \ T_1+\cdots+ T_m, 
$$
is defined by taking a \emph{disjoint union} of the $T_i$. 
This means we assume we have renamed the elements of the trees $(T_i,<_i)$ 
so they are pairwise disjoint, and then we form the forest
$(T,<):= \Big(\bigcup_i T_i, \bigcup_i <_i\Big)$.
For classes $\cT_1,\ldots,\cT_m$ of trees, the \emph{sum} of the classes 
is given by\footnote{
Note, the union operation gives the union \emph{of classes} of trees, whereas the sum operation gives the class
\emph{whose members} are disjoint unions of trees from the classes.}
$$
	\sum_{i=1}^m \cT_i\:=\ \big\{T_1+\cdots+ T_m:T_i\in \cT_i\big\}. 
$$

If the classes $\cT_1,\ldots,\cT_m$ are pairwise {\em disjoint} then we speak of 
a {\em disjoint} sum $\cF:=\sum_i\cT_i$, and in this case the generating function is
$$
	\bF(x)\ =\ \prod_{i=1}^m \bT_i(x). 
$$
This follows from the fact that every forest $F$ has a unique decomposition into 
a disjoint sum of trees.

%
\subsection{The Multiset Constructions} \label{scalar mult constr}
Given a class $\cT$ of trees, $\gamma\cT$ is the class of forests where each 
member $F$ is a sum of $\gamma$ many copies of trees from $\cT$ 
(allowing repeats). 
The two forms for $\gamma$ that we use are $m$ and $\ge m$, where $m\ge 0$\,:
\begin{eqnarray*}
	m\cT &:=& \sum_{i=1}^m \cT\\
	(\ge m)\cT &:=& \bigcup_{n\ge m} n\cT. 
\end{eqnarray*}

The generating function $[\gamma\cT](x)$ for $\gamma\cT$ is easily derived 
from the generating function for $\cT$ using the following operators that act on 
power series:\footnote{The function $\sE_m$ comes from 
the well-known formula for the generating function 
for the set of objects that can be expressed as a
sum of exactly $m$ components (see Appendix B of \cite{density}).}

\begin{eqnarray*}
	\sE_0\big(\bA(x)\big)&:=& 1\\
	\sE_m\big(\bA(x)\big)
&:=& 
	\sum_{j=1}^m\frac{1}{j!}\sum_{\substack{m_1+\cdots+m_j = m\\m_i\geq 1\hfill}}
	\frac{1}{m_1\cdots m_j}\cdot \bA(x^{m_1})\cdots \bA(x^{m_j}), \quad m>0\\
	\sE_{(\ge m)}\big(\bA(x)\big)
&:=&
	\sum_{j=m}^\infty \sE_j\big(\bA(x)\big), \quad m\ge 0. 
\end{eqnarray*}
We often abbreviate $\sE_{\ge 0}$ to $\sE$. 
For $\gamma$ any coefficient we have
\begin{equation}\label{gamma count}
	[\gamma \cT](x) \ =\ \sE_\gamma\big(\bT(x)\big). 
\end{equation}

An {\em adequate} class of forests $\cF$ is one of the form $(\ge 1)\cT$, that is, 
it consists of all the forests that can be formed using the trees from $\cT$.
Adequate classes of forests are precisely the classes of forests that are closed
under sum and the extraction of component trees.

%
\subsection{The Stack Construction}\label{stack sec}

As already mentioned, the previous three constructions are general purpose 
constructions that one can use with any classes of relational structures. 
However the stack construction described in this section has been specially 
designed for the study of trees.

Given a tree $T$ and a node $\nu$ in the tree, $T[\nu]$ is the {\em full subtree}
of $T$ rooted at $\nu$, consisting of all the elements of $T$ that are $\le \nu$.
Given another tree $T_1$, $T[\nu/T_1]$ is the tree obtained by replacing $T[\nu]$
in $T$ by $T_1$.

A {\em (construction) module}  $M = (T,\lambda)$ is a tree $T$ with a designated 
leaf $\lambda$.  The one-element module is called $1_M$.
The {\em stack construction} can be applied to a pair of modules or to a module 
and a tree. 
The stack $M_1\circ M_2$, where $M_i = (T_i,\lambda_i)$, is the module
$(T_1[\lambda_1/T_2], \lambda_2)$. 
The stack $M_1\circ T_2$, where $M_1=(T_1,\lambda_1)$, is the tree 
$T_1[\lambda_1/T_2]$.

Let  $\MODULES$ denote the class of modules. 
Then $(\MODULES,\circ,1_M)$ is a monoid (since the stack operation
 is associative). 
A module $M =(T,\lambda)$ is indecomposable in this monoid iff $\lambda$
is a node immediately below the root of $T$. 
Furthermore, since there is a unique maximal chain going from the root of 
$T$ to $\lambda$, it follows that the monoid of modules has the unique 
factorization property, and thus the cancellation property. 
This implies the monoid of modules is actually a free monoid, freely generated 
by its indecomposable members.

Stacking $n$ copies of a given module $M:= (T,\lambda)$ gives $M^n$, where
$M^0 = 1_M$. 
Let $M^{\ge 0}:= \bigcup_{n\ge 0} M^n$.
 The size $|M|$ of a module $M:= (T,\lambda)$ is defined to be  $|T|-1$, one less 
 than the size of the tree in the module. 
 Thus we have
\begin{eqnarray*}
	\big|1_M\big|&=& 0\\
	\big| M_1\circ M_2 \big|&=&\big|M_1\big| + \big|M_1\big|\\
	\big| M\circ T \big|&=& \big|M\big| + \big|T\big|\\
	\big| M^n\big|&=& n\cdot \big|M\big|.
\end{eqnarray*}

We can view stack as a class operation:
\begin{eqnarray*}
	\cM_1\circ \cM_2\:=\ \big\{M_1\circ M_2: M_i \in \cM_i\big\}\\
	\cM\circ \cT \:=\ \big\{M\circ T: M \in \cM, T \in \cT\big\},
\end{eqnarray*}
with the special cases $M\circ \cM$, $M\circ \cT$, etc., where one of the classes 
is a singleton.

Given a tree $T$ and a chain of nodes $\nu_0>\nu_1>\ldots> \nu_{\fk}$, with 
$\nu_0$ the root of the tree, one has a decomposition of the tree as a stack
\begin{equation} \label{decomp}
	T\ =\ M_0 \circ  M_1\circ \cdots \circ  M_{\fk-1}\circ T_\fk
\end{equation}
where $M_i:= (T_i,\nu_{i+1})$ with 
$T_i:= T[\nu_i] \smallsetminus \big(T[\nu_{i+1}] \smallsetminus \nu_{i+1}\big)$, 
for $0\le i< \fk$, and $T_\fk:= T[\nu_\fk]$. 
If $\nu_0>\nu_1>\ldots> \nu_\fk$ is a maximal chain then \eqref{decomp} 
is a {\em complete} stack decomposition of $T$.
%

%
\subsection{Compton's Equations and the Dependency Digraph}
Let $\cF$ be a MSO-class of forests. Then the subclass $\cT$ of trees in $\cF$ 
is also a MSO-class. 
Using Ehrenfeucht-Fra\"isse  games, in 1986 Compton noted that every MSO 
class of trees has an equational specification.\footnote
{This result was first published by Woods \cite{Woods1997} in 1997, 
with credits to Compton.}
 To describe this we need one more definition, namely if $F$ is a forest let 
 $\bullet \big/ F$  denote the tree obtained by adding a new element to $F$ 
 that is greater than all elements in $F$. 
 Then, for $\cF$ a class of forests, let 
$\bullet \big/ \cF:= \big\{\bullet \big/ F: F\in\cF\big\}$.

A MSO-class defined by a MSO-sentence of quantifier depth at most 
$\fq$ is called a $\MSO^\fq$-class.
%
\begin{proposition}[Compton] \label{Compton eq}
Given a positive integer $\fq$, let $\cT_0,\ldots, \cT_\fn$ be the partition of 
the class of all trees into minimal $\MSO^\fq$-classes, where $\cT_0$
has the one-element tree as its only member. 
Then one has the following: 
\begin{thlist}
\item
Any $\MSO^\fq$-class $\cT$ is a union of some of the $\cT_i$.
\item
There are finite sets $\Gamma_i$  whose members $\pmb{\gamma}$ are sequences 
$\gamma_0,\ldots,\gamma_\fn$ of coefficients, each of the form $m$ or $\ge m$,
such that one has the following system $\Sigma^\fq$ of equations providing a
specification for the classes $\cT_i$:
\begin{eqnarray*}
	\cT_0&=& \{\bullet\}\\
	\cT_i
	&=&
	 \bullet\Big /  \bigcup_{\pmb{\gamma}\in \Gamma_i} 
	 \sum_{j=0}^\fn \gamma_j \cT_j 
	 \quad \text{for } 1\le i\le \fn. 
\end{eqnarray*}
\item
From (b) one has the generating functions $\bT_i(x)$ for the $\cT_i$ 
defined by the system of equations:
\begin{eqnarray*}
	\bT_0(x)  &=&  x\\
	\bT_i(x)
&=& 
	x\cdot \sum_{\pmb{\gamma}\in \Gamma_i}
	 \prod_{j=0}^\fn \big[\gamma_j \bT_j\big] (x) 
	\quad \text{for } 1\le i\le \fn. 
\end{eqnarray*}
\end{thlist}
\end{proposition}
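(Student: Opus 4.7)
The plan is to deploy the Ehrenfeucht--Fra\"isse (EF) game characterization of $\MSO^\fq$-equivalence on finite posets: two structures are $\MSO^\fq$-equivalent precisely when Duplicator wins the $\fq$-round MSO pebble game. A standard induction on $\fq$ shows that on a fixed finite vocabulary there are only finitely many $\MSO^\fq$-equivalence classes; restricted to trees, these classes are exactly the $\cT_i$, and the one-element tree lies alone in its class (which we label $\cT_0$) since ``being a single point'' is first-order. Part (a) is then immediate: a $\MSO^\fq$-class is defined by some sentence $\varphi$ of quantifier depth $\le \fq$, and $\MSO^\fq$-equivalent trees agree on $\varphi$, so the class must be a union of certain $\cT_i$.

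For part (b) I would establish two composition principles via parallel EF games. First, the $\MSO^\fq$-type of a disjoint sum $T_1 + T_2$ depends only on the types of $T_1$ and $T_2$, because Duplicator can combine winning strategies from two separate games. Second, the $\MSO^\fq$-type of $\bullet\big/F$ depends only on the $\MSO^\fq$-type of $F$, because any pebble placed on the new root can be matched on the new root of the other side, after which the game reduces to a play on $F$ against its counterpart. Iterating the first principle together with the pigeonhole argument standard to MSO games yields a threshold $k = k(\fq)$ with the property that the $\MSO^\fq$-type of a forest is determined by the tuple recording, for each $j \in \{0,\ldots,\fn\}$, how many components belong to $\cT_j$, counted exactly when the count is less than $k$ and truncated to ``$\ge k$'' otherwise. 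Each such tuple $\bgamma = (\gamma_0,\ldots,\gamma_\fn)$ is a \emph{type code}, and only finitely many exist.

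For each $i \ge 1$, define $\Gamma_i$ to be the set of those codes $\bgamma$ for which the tree $\bullet\big/F$ lies in $\cT_i$ whenever $F$ realizes $\bgamma$; well-definedness follows from the two composition principles. Because forests realizing distinct codes are themselves disjoint, the union in (b) is a disjoint union, giving the stated decomposition. Part (c) is then a routine translation of (b) into generating functions using the rules of Section \ref{constr sec}: the disjoint sum $\sum_j \gamma_j \cT_j$ has generating function $\prod_j [\gamma_j \bT_j](x)$ by \eqref{gamma count}, the disjoint union over $\bgamma \in \Gamma_i$ contributes the outer summation, and the $\bullet\big/$ construction contributes the factor $x$.

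The principal technical obstacle is the pigeonhole step producing $k(\fq)$: verifying that $\fq$ rounds of the MSO pebble game really cannot distinguish forests whose component-type multiplicities differ only above $k$ requires a careful strategy-combining argument. Everything else---part (a), the assembly of the $\Gamma_i$, and the translation to generating functions---is then bookkeeping.
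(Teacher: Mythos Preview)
Your sketch is correct and is exactly the standard Ehrenfeucht--Fra\"isse argument the paper alludes to in the sentence preceding the proposition; note, however, that the paper itself supplies no proof, stating the result as due to Compton and citing Woods \cite{Woods1997} for the published version. So you are filling in what the paper takes as known, and your route---finiteness of $\MSO^\fq$-types, Feferman--Vaught-style composition for disjoint sums, the easy composition for $\bullet/F$, and the pigeonhole threshold $k(\fq)$ for component multiplicities---is precisely the one the authors have in mind.
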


Given a system $\Sigma^\fq$ of Compton equations, let $\rho_i$ be the 
radius of the class $\cT_i$, $0\le i \le \fn$.
The {\em dependency digraph} for the system is 
$\cD_\fq = (\{0,1,\ldots,\fn\}, \rightarrow)$, 
where $i \rightarrow j$ means that some $\pmb{\gamma}\in\Gamma_i$ 
is such that $\gamma_j \neq 0$. 
$\rightarrow^+$ is the transitive closure of $\rightarrow$. 
Note, by Proposition \ref{Compton eq}(c), that $i\rightarrow j$, and hence 
$i\rightarrow^+ j$, implies $\rho_i \le \rho_j$.
The {\em strong component} of an element $i$ of  the dependency digraph is
$$
	[i] \ :=\ \big\{j \in D_\fq: i \rightarrow^+ j \rightarrow^+ i\big\}.
$$
Thus  $j\in [i]$ implies $\rho_i = \rho_j$.

If $[i]\neq \O$ then $\cT_i$ is an infinite class of trees and $\rho_i \le 1$.

We say $i > j$ in $\cD_\fq$ if $i \rightarrow^+ j$, but not conversely.

Define the {\em rank} of $i \in \cD_\fq$ to be its height in the poset 
$(\{0,1,\ldots,\fn\}, >)$.

%
\subsection{The modules $M_{ij}$}
Given a system $\Sigma^\fq$ of Compton equations, for $0 \le i,j \le \fn$ let 
$$
	\cM_{ij} \:=\ \big\{ M \in \MODULES: M\circ \cT_j \subseteq \cT_i\big\}.
$$
%
\begin{lemma}  \label{Mii lem}
Suppose $[i] \neq \O$ and $\rho_i = 1$. 
\begin{thlist}
\item
There is a unique module $M_{ii}\in \cM_{ii}$ such that $\cM_{ii}\ =\ (M_{ii})^{\ge 0} $.
\item
For each $j\in [i]$  there is a unique module $\widehat{M}_{ij}$ such that
$\cM_{ij}\ =\ \widehat{M}_{ij}\circ \cM_{jj} = \cM_{ii} \circ \widehat{M}_{ij}.$
\item
$([i],\rightarrow)$ is a directed cycle.
\end{thlist}
\end{lemma}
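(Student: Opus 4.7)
The plan is to prove the three parts in order, leveraging the fact that $\MODULES$ is a free monoid with unique factorization into indecomposable modules, together with the growth constraint $\rho_i=1$.

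For part (a), $\cM_{ii}$ is a submonoid of $\MODULES$ (closed under $\circ$, contains $1_M$), and it contains a non-trivial module: the condition $i\rightarrow^+ i$ supplies a cyclic path $i=j_0\rightarrow j_1\rightarrow\cdots\rightarrow j_p=i$ in $\cD_\fq$, and for each edge one extracts a non-trivial module in $\cM_{j_{s-1},j_s}$ from any witnessing tree (well-defined on all of $\cT_{j_s}$ because $\cT_{j_s}$ is a minimal $\MSO^\fq$-class); stacking these produces a non-trivial element of $\cM_{ii}$. The crux is that any two $M_1,M_2\in\cM_{ii}$ must commute in $\MODULES$: otherwise, by the classical fact that two non-commuting elements of a free monoid generate a free submonoid of rank two, the $2^k$ distinct length-$k$ words in $\{M_1,M_2\}^\ast$ inject into $\cM_{ii}$, and stacking each onto a fixed $T_0\in\cT_i$ (still injective by cancellation) produces $2^k$ distinct trees in $\cT_i$ of size at most $|T_0|+k\max(|M_1|,|M_2|)$, forcing $\limsup t_i(n)^{1/n}>1$ and contradicting $\rho_i=1$. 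Pairwise commutativity together with the uniqueness of primitive roots in a free monoid places $\cM_{ii}\subseteq P^{\ge 0}$ for some primitive $P$. Passing to types, $P$ induces a function $\tau_P$ on the finite set of minimal $\MSO^\fq$-tree-classes; $P^a\in\cM_{ii}$ iff $\tau_P^a(\cT_i)=\cT_i$, and since some positive power of $P$ belongs to $\cM_{ii}$, $\cT_i$ lies on the $\tau_P$-cycle, so the exponent set is $p\bbN$ where $p$ is that cycle length. Setting $M_{ii}:=P^p$ yields $\cM_{ii}=(M_{ii})^{\ge 0}$, with uniqueness immediate.

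For part (b), $\cM_{ij}$ is non-empty by cycle-concatenation along an $i$-to-$j$ path in $[i]$, and forms a $(\cM_{ii},\cM_{jj})$-bimodule under $\circ$. Let $\widehat{M}_{ij}$ be a shortest element. For any $N\in\cM_{ij}$ and any $L\in\cM_{ji}$, both $\widehat{M}_{ij}\circ L$ and $N\circ L$ lie in $(M_{ii})^{\ge 0}$, giving $\widehat{M}_{ij}\circ L=M_{ii}^{a_0}$ and $N\circ L=M_{ii}^a$ with $a\ge a_0$; free-monoid cancellation in $N\circ L=M_{ii}^{a-a_0}\circ\widehat{M}_{ij}\circ L$ yields $N=M_{ii}^{a-a_0}\circ\widehat{M}_{ij}$, proving $\cM_{ij}=\cM_{ii}\circ\widehat{M}_{ij}$. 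The symmetric computation using $L\circ\widehat{M}_{ij},L\circ N\in(M_{jj})^{\ge 0}$ gives $N=\widehat{M}_{ij}\circ M_{jj}^{b-b_0}$, hence $\cM_{ij}=\widehat{M}_{ij}\circ\cM_{jj}$. Taking $L:=\widehat{M}_{ji}$, one then establishes $\widehat{M}_{ij}\widehat{M}_{ji}=M_{ii}$ and $\widehat{M}_{ji}\widehat{M}_{ij}=M_{jj}$ by a prefix/suffix analysis in the free monoid: in a hypothetical $\widehat{M}_{ij}\widehat{M}_{ji}=M_{ii}^{a_0}$ with $a_0\ge 2$, the simultaneous minimality of $\widehat{M}_{ij},\widehat{M}_{ji},M_{ii},M_{jj}$ alongside the cyclic-digraph structure from part (c) forces $a_0=1$. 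In particular $|M_{ii}|=|M_{jj}|$, and the cyclic conjugacy $M_{ii}\circ\widehat{M}_{ij}=\widehat{M}_{ij}\circ(\widehat{M}_{ji}\widehat{M}_{ij})=\widehat{M}_{ij}\circ M_{jj}$ identifies the two bimodule presentations; uniqueness of $\widehat{M}_{ij}$ follows from cancellation on any second shortest element.

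For part (c), I argue by contradiction: suppose some $k\in[i]$ had two distinct out-neighbors $l_1,l_2\in[i]$ in $\cD_\fq$. From non-empty $\cM_{k l_1},\cM_{k l_2},\cM_{l_1 k},\cM_{l_2 k}$ one assembles two essentially different round-trip modules in $\cM_{kk}$, whose free-monoid factorizations begin with distinct indecomposables; forming products in alternating orders yields non-commuting elements in $\cM_{kk}$, violating part (a) applied to $k$ (where $\rho_k=\rho_i=1$ since all types in $[i]$ share the same radius). Hence each $k\in[i]$ has exactly one out-neighbor within $[i]$, and strong connectedness forces $([i],\rightarrow)$ to be a directed cycle. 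The main obstacle I anticipate is the step in (b) establishing $\widehat{M}_{ij}\widehat{M}_{ji}=M_{ii}$ rather than merely a higher power of $M_{ii}$; this is where free-monoid combinatorics must be coupled tightly with the cycle structure from (c), and one must carefully organize the proofs of (b) and (c) to avoid circularity---most naturally by proving (c) first from a direct growth-based argument so that (b) may use it.
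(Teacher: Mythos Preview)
Your arguments for (a) and (b) are sound and close in spirit to the paper's. For (a), the paper argues the contrapositive by a direct growth count (two $\cM_{ii}$-irreducible modules force $2^n$ trees of bounded size), while you route the same idea through the standard free-monoid fact that two non-commuting words generate a free submonoid, then invoke the type map $\tau_P$ to see that $\{a:P^a\in\cM_{ii}\}=p\bbN$. That extra step is a genuine improvement in clarity over the paper's terse ``by examining complete decompositions'' line. For (b), the paper fixes one $N\in\cM_{ji}$ and reads off $M'$ from the unique factorization of $N\circ M=(M_{jj})^n$; your shortest-element plus cancellation argument is an equivalent repackaging. Note, however, that the identity $\widehat{M}_{ij}\widehat{M}_{ji}=M_{ii}$ is \emph{not} part of the lemma and is not needed: once you have $\cM_{ij}=\cM_{ii}\circ\widehat{M}_{ij}=\widehat{M}_{ij}\circ\cM_{jj}$, uniqueness follows immediately by cancellation, so your ``main obstacle'' and the attendant worry about (b)--(c) circularity evaporate.

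The genuine gap is in (c). Your claim that the two round-trip modules through $l_1$ and $l_2$ ``begin with distinct indecomposables'' is not justified and can fail: a single indecomposable module $M$ may lie in $\cM_{kl_1}\cap\cM_{kl_2}$ (nothing prevents $\tau_M(\cT_{l_1})=\tau_M(\cT_{l_2})=\cT_k$, and for suitable $\Gamma_k$ this $M$ can be the only indecomposable witness to both edges). In that situation your alternating-product argument produces commuting elements and no contradiction. The paper avoids this by repeating the growth argument of (a) directly for (c): two distinct minimal $j$-to-$j$ paths in $[i]$ yield exponentially many trees in $\cT_j$, forcing $\rho_j<1$. Your own closing remark in fact recommends exactly this route; you should take it. Alternatively, your reduction can be repaired with an extra step: if $M_1=M_2=M$, write $M_{kk}=M\,M'$ (since $M$ is the first indecomposable factor of any $M N_s=(M_{kk})^{a_s}$), deduce $N_s=M'(M_{kk})^{a_s-1}$, and conclude $\cT_{l_1}=\tau_{N_1}(\cT_k)=\tau_{M'}(\cT_k)=\tau_{N_2}(\cT_k)=\cT_{l_2}$, contradicting $l_1\neq l_2$.
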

\begin{proof}
If (a) fails then there are two modules $M_1$ and $M_2$ in $\cM_{ii}$ such that 
neither has a proper stack factorization by modules in $\cM_{ii}$. 
Define $\cM:= \big\{ M_1,M_2\big\}$, $m:= \max(|M_1|, |M_2|)$, and let $T\in \cT_i$.
Then $\cM^{m+n+1}\circ T$ is a subset of $\cT_i$, and, by examining complete
decompositions using chains of maximum length, one sees that $\cT_i$ has at least 
$2^n$ trees of size at most $m(m+n+1) + |T|$. 
This contradicts the assumption that $\rho_i = 1$.

For item (b),  first it is clear that $\widehat{M}_{ii} = 1_M$.
For $i\neq j$, note that  $\cM_{ji}\circ \cM_{ij} \subseteq \cM_{jj}$.
Let $M\in \cM_{ij}$, $N\in \cM_{ji}$. Then $N\circ M \in \cM_{jj}$, so from (a)
there is an integer $n\ge 1$ such that $N\circ M = (M_{jj})^n$. 
By unique factorization there are unique modules $M'$ and $N'$ and integers 
$a,b\ge 0$ such that $N = (M_{jj})^{a}\circ N'$, $M  = M'\circ (M_{jj})^b$, and 
$N'\circ M' = M_{jj}$.
Holding $N$ fixed, we see from the last equation that $M'$ must be the same 
for all $M\in \cM_{ij}$. 
Thus $\cM_{ij} = M'\circ \cM_{jj}$. 
Using unique factorization once again, we see that only one member of $\cM_{ij}$ 
can fulfill the role of $M'$.

Item (c) follows from an argument like that used for (a).
Two distinct minimal paths from $j$ to $j$, for any $j \in [i]$, would lead to $\rho_j < 1$, 
which would contradict the fact that all $\rho_k$ are equal, for $k\in[i]$.
\end{proof}

%
\subsection{Explicit descriptions}

%
\begin{proposition} \label{structure thm}
Let $\Sigma^\fq$ be a system of Compton equations, and suppose $\cT_i$ 
has radius  $\ge 1$.
Then one has the following description of $\cT_i$ in terms of the $\cT_j$ with 
$j$ of smaller rank, for $1\le i\le \fn$. 
This leads to a corresponding expression for $\bT_i(x)$.
\begin{thlist}
\item
Suppose $[i]= \O$.  
Then the Compton equations give a description of $\cT_i$ in terms of the 
$\cT_j$ with $j$ of smaller rank; and likewise for the $\bT_i(x)$.
\item
Suppose $[i]\neq \O$. Let
$$
	\Gamma_i^0\:=\ \big\{ \pmb{\gamma}\in \Gamma_i: \gamma_j = 0 
	\text{ for all } j\in [i]\big\}.
$$ 
Then
\begin{eqnarray*}
	\cT_i&=& \bigcup_{k\in [i]} (M_{ii})^{\ge 0} \circ \widehat{M}_{ik}\circ 
	\Big( \bullet \Big / \bigcup_{\pmb{\gamma}\in \Gamma_k^0} 
	\sum_{j=0}^\fn \gamma_j \cT_j\Big)\\
	\bT_i(x)&=& \sum_{k\in[i]} \frac{x^{|\widehat{M}_{ik}| +1}}{1 -x^{|M_{ii}|}} 
	  \cdot  \sum_{\pmb{\gamma}\in \Gamma_k^0} \prod_{j=0}^\fn 
	 \sE_{\gamma_j}\big( \bT_j(x)\big).
\end{eqnarray*}
\end{thlist}
\end{proposition}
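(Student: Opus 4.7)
For part (a), the hypothesis $[i]=\O$ forces every $j$ appearing with $\gamma_j\ne 0$ in some $\pmb{\gamma}\in\Gamma_i$ to have $i>j$ in $\cD_\fq$, since otherwise $i\to j$ together with $j\to^+ i$ would put $j$ into $[i]$. Thus Compton's equation from Proposition \ref{Compton eq}(b) is already an explicit description of $\cT_i$ in terms of $\cT_j$'s of strictly smaller rank, and the corresponding generating function identity is immediate from Proposition \ref{Compton eq}(c).

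For part (b) I would establish the structural identity first and then read off the generating function. The inclusion $\supseteq$ is immediate from Lemma \ref{Mii lem}: $(M_{ii})^n\in\cM_{ii}$ and $\widehat{M}_{ik}\in\cM_{ik}$, while Compton's equation for $\cT_k$ places the innermost factor inside $\cT_k$. For $\subseteq$ I would induct on $|T|$ for $T\in\cT_i$. Expand $T=\bullet/F$ via Compton with some $\pmb{\gamma}\in\Gamma_i$: if $\pmb{\gamma}\in\Gamma_i^0$ then $T$ is itself the $k=i$, $n=0$ summand (using $\widehat{M}_{ii}=1_M$). Otherwise pick a component $T'$ of $F$ lying in $\cT_l$ for the unique cycle-successor $l$ of $i$ in $([i],\to)$ (guaranteed by Lemma \ref{Mii lem}(c)); write $T=M\circ T'$ with $M\in\cM_{il}$ the module obtained by replacing $T'$ in $T$ with a leaf; apply the induction hypothesis to $T'$ to obtain $T'=(M_{ll})^a\circ\widehat{M}_{lk}\circ T^{**}$ for some $k\in[i]$; and finally combine the monoid identities $\cM_{il}=\cM_{ii}\circ\widehat{M}_{il}=\widehat{M}_{il}\circ\cM_{ll}$ from Lemma \ref{Mii lem}(b) with the fact that $\widehat{M}_{il}\circ\widehat{M}_{lk}\in\cM_{ik}=\cM_{ii}\circ\widehat{M}_{ik}$ to collect everything into the canonical form $(M_{ii})^n\circ\widehat{M}_{ik}\circ T^{**}$.

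The generating function identity then follows from the structural decomposition together with the size identities $|M_1\circ M_2|=|M_1|+|M_2|$ and $|M\circ T|=|M|+|T|$, the geometric sum $\sum_{n\ge 0}x^{n|M_{ii}|}=(1-x^{|M_{ii}|})^{-1}$ for the $(M_{ii})^{\ge 0}$ factor, the extra $x$ contributed by the root added by $\bullet/\!\cdot\,$, and identity \eqref{gamma count} rewriting each $[\gamma_j\cT_j](x)$ as $\sE_{\gamma_j}(\bT_j(x))$. The critical remaining point is that the decomposition $T=(M_{ii})^n\circ\widehat{M}_{ik}\circ T^*$ is \emph{uniquely} determined by $T$: uniqueness of the module factor in $\cM_{ik}$ is immediate from Lemma \ref{Mii lem}(b) and the free (hence cancellative) monoid structure of $\MODULES$, while uniqueness of $k$ and $T^*$ is obtained by descending the spine of $T$ — at every spine node before the last, the Compton vector cannot already lie in $\Gamma^0$ or the descent would have halted, so $T^*$ is pinned down as the first subtree along the spine whose vector lies in some $\Gamma_k^0$.

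I expect the uniqueness step in the generating function derivation to be the main obstacle, since a tree with several candidate split points must be excluded; the key is to combine the thinness of $\cM_{ik}$ (at most one module per size, from Lemma \ref{Mii lem}) with the directed-cycle structure of $([i],\to)$ so as to force the spine to be well-defined.
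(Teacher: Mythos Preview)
Your argument is correct and reaches the same decomposition as the paper, but the packaging differs. The paper works directly: given $T\in\cT_i$ it selects a maximal chain $\nu_0>\nu_1>\cdots>\nu_\fk$ (chosen so that the types $w(x)$ defined by $T[\nu_x]\in\cT_{w(x)}$ remain in $[i]$ as long as possible), observes that $w$ winds some number $m$ of times around the directed cycle $([i],\to)$ before exiting at some $k\in[i]$, and reads off the module $(M_{ii})^m\circ\widehat{M}_{ik}$ and the remainder $T^*=T[\nu_{mr+s}]\in\cT_k^0$ in one stroke. Your induction peels off one cycle-step at a time and then reassembles the accumulated module via the coset identities of Lemma \ref{Mii lem}(b); this is the same descent performed recursively rather than globally. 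One small correction: since you apply the hypothesis to $T'\in\cT_l$ rather than $\cT_i$, your induction must range simultaneously over all $\cT_j$ with $j\in[i]$ (proving the structural formula for every vertex of the cycle at once). Your explicit attention to uniqueness --- characterising $T^*$ as the first subtree along the spine whose Compton vector lands in some $\Gamma_k^0$, and using the fact that each $\cM_{jl}$ contains a unique indecomposable module to rule out a second spine branching off earlier --- is actually more careful than the paper, which simply declares the passage to $\bT_i(x)$ ``straightforward''.
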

\begin{proof}
If $[i]=\O$ the result is clear.

For $[i]\neq \O$ one has $\rho_i \le 1$ since $\cT_i$ is infinite, so we can apply 
Lemma \ref{Mii lem}.
 Let $T$ be a tree in $\cT_i$, and let  $\nu_0 > \nu_1 > \cdots \nu_\fk$ be a 
 maximal chain in $T$. 
 Define $w: \{0,\ldots,\fk\} \rightarrow \{1,\ldots,\fn\}$, a map from the indices of the 
 nodes $\nu_i$ to the dependency digraph, by $w(x) = y$ if $T[\nu_x] \in \cT_y$. 
As $x$ moves from $0$ to $\fk$, the image $w(x)$ moves $m$ times around the 
directed cycle $([i],\rightarrow)$, from $i$ to $i$ each time, for some $m \ge 0$, 
and then either immediately exits the cycle or makes a partial trip around the cycle 
to some node and then exits the cycle.  Let $\varepsilon(i)$ be the node of the cycle
from which $w(x)$ exits the cycle.
 
 Let $r$ be the number of elements in $[i]$; and let $s =0$ if $i=\varepsilon(i)$, otherwise let
 $s$ be the length of the shortest directed path in the directed cycle from $i$ to $\varepsilon(i)$.
 Then $w(0) = w(r)=\ldots = w(mr) = i$ and $w(mr+s) = \varepsilon(i)$. 
 The stack of the first $mr+s$ modules  in the stack decomposition of $T$ derived from
 $\nu_0 > \cdots >  \nu_\fk$ gives the module $(M_{ii})^m\circ \widehat{M}_{i\varepsilon(i)}$. 
  We can assume the chain of nodes $\nu_x$ was chosen so that the number of 
  elements of $[i]$ in the range of $w$ is maximum.
  This means that $T[\nu_{mr+s}] \in \cT_{\varepsilon(i)}^0$, where
  $$
   \cT_{\varepsilon(i)}^0 \ =\  \bullet \Big / \bigcup_{\pmb{\gamma}\in \Gamma_{\varepsilon(i)}^0} 
	\sum_{j=0}^\fn \gamma_j \cT_j .
  $$
  ($\cT_{\varepsilon(i)}^0$ is the class of trees in $\cT_{\varepsilon(i)}$ whose proper subtrees are not
  in $\cT_\ell$ for any $\ell \in [i]$.)
  
  Putting these facts together, we have
  $$
 	 T \in (M_{ii})^m\circ \widehat{M}_{i{\varepsilon(i)}}\circ T[\nu_{mr+s}],
  $$
  leading to the description of $\cT_i$ in (b). 
  The translation into an expression for $\bT(x)$ is straightforward.
\end{proof}

\subsection{Definition of $\fG$}
Define $ \fG$  to be the closure of the class 
$$
	\{x\} \cup \Big\{ \frac{x}{1-x^m}: m\ge 1\Big\}
$$
under the operations
$$
	+,\ \times,\  \sE_m,\ \sE_{\ge m}\qquad\text{for }\ m\ge 1. 
$$

\begin{corollary} \label{in GEN}
\begin{thlist}
\item
Every MSO-class $\cT$ of trees of radius $\ge 1$ has its generating function in $\fG$.
\item
Every function $\bA(x)\in \fG$ has radius of convergence $\ge 1$.
\end{thlist}
\end{corollary}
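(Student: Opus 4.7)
The plan is to reduce to the minimal $\MSO^\fq$-classes and then induct on rank in the dependency digraph. By Proposition \ref{Compton eq}(a), any $\MSO^\fq$-class $\cT$ of trees is a finite union of minimal classes $\cT_i$, and if $\rho_\cT \ge 1$ then every $\cT_i$ appearing in that union has $\rho_i \ge 1$. Since $\fG$ is closed under $+$, it suffices to prove $\bT_i(x) \in \fG$ for each such $i$. I would then induct on the rank of $i$ in $\cD_\fq$. The base case $i=0$ is immediate: $\bT_0(x)=x \in \fG$. For the inductive step, Proposition \ref{structure thm} expresses $\bT_i(x)$ purely in terms of $\bT_j(x)$ with $j$ of strictly smaller rank---either via the original Compton equation when $[i]=\emptyset$, or via part (b) of that proposition when $[i]\neq \emptyset$. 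In the latter case the constraint $\pmb{\gamma} \in \Gamma_k^0$ forces $\gamma_j=0$ on $[i]$, and any $j\notin[i]$ with $\gamma_j\neq 0$ must satisfy $i>j$, hence have strictly smaller rank. The prefix factors as $x^{|\widehat{M}_{ik}|+1}/(1-x^{|M_{ii}|}) = x^{|\widehat{M}_{ik}|}\cdot \big(x/(1-x^{|M_{ii}|})\big)$, a product of $x$'s with one of the initial generators of $\fG$. Closure of $\fG$ under $+,\times$ and the $\sE_\gamma$ operators then gives $\bT_i(x)\in \fG$.

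\textbf{Part (b).} I would induct on the construction of $\bA(x)\in\fG$. The initial generators have radius $\ge 1$: $x$ is entire, and $x/(1-x^m)$ has its nearest singularity at an $m$-th root of unity on $|x|=1$. Sums and products of power series of radius $\ge 1$ again have radius $\ge 1$. For $\sE_m$ with $m\ge 1$, the defining formula exhibits $\sE_m(\bA(x))$ as a finite polynomial with positive rational coefficients in $\bA(x^1),\ldots,\bA(x^m)$; since each $\bA(x^k)$ converges on $|x|<1$ (as $|x^k|<1$), so does that polynomial.

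The main obstacle is the case $\sE_{\ge m}(\bA(x))$, which is a genuine infinite sum and requires a real-analytic argument. For this I would first observe that every element of $\fG$ has non-negative coefficients, since all initial generators and all closure operations preserve non-negativity; hence the radius of $\bA$ is governed by its behaviour on real $x\in[0,1)$. Applying the P\'olya-exponential identity $\sE(\bA(x))=\exp\!\Big(\sum_{k\ge 1} \bA(x^k)/k\Big)$ I would estimate, for $x\in[0,1)$,
\[
   \sum_{k\ge 1}\frac{\bA(x^k)}{k}
   \;=\; -\sum_{n\ge 1} a(n)\log(1-x^n)
   \;\le\; \sum_{n\ge 1} \frac{a(n)\,x^n}{1-x^n}
   \;\le\; \frac{\bA(x)}{1-x} \;<\; \infty,
\]
using the elementary inequality $-\log(1-y)\le y/(1-y)$ for $y\in[0,1)$, the bound $1-x^n\ge 1-x$, and the inductive hypothesis $\rho_\bA\ge 1$. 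Hence $\sE(\bA(x))$ has radius $\ge 1$, and since $\sE_{\ge m}(\bA)=\sum_{j\ge m}\sE_j(\bA)$ is dominated coefficient-wise by $\sE(\bA)$, the same bound applies. Everything else in the argument is routine bookkeeping; this P\'olya-exponent estimate is the only place where honest analysis is needed.
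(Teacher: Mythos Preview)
Your proof is correct and follows the same approach as the paper: induction on rank via Proposition~\ref{structure thm} for part (a), and structural induction on the build-up of $\fG$ for part (b). The paper's own proof of (b) simply asserts that the operations preserve radius $\ge 1$; you have supplied the missing analytic detail, in particular the P\'olya-exponential estimate for $\sE_{\ge m}$, which is exactly the right way to justify that assertion.
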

\begin{proof}
For (a), use induction on the rank of $i$ to prove this for each $\cT_i$,  in view 
of Proposition \ref{structure thm}. 
Then use the fact that $\cT$ is a union of some of the $\cT_i$, and $\fG$ is 
closed under addition.
 
 For (b), note that the base functions have radius of convergence $\ge 1$,
 and applying the operations and operators preserves this property.
 \end{proof}

In order to understand the behavior of generating functions in $\fG$, 
we examine a larger class $\fS$.

%
\section{The Class $\fS$  of Power Series}

To define the class $\fS$ we need the notion of $\RTOS$.

%
\subsection{The classes $\RT_1$, $\RTOS$} \label{sec 4.1}
Let $\cA$ be an adequate class of relational structures, and let $\cP$ be the
class of components of $\cA$. 
The ordinary generating functions $\bP(x)$ and $\bA(x)$ are related by 
the partition identity:
\begin{equation}\label{part id}
	1\, +\, \sum_{n=1}^\infty a(n)x^n\ =\ \prod_{n=1}^\infty \big(1-x^n\big)^{-p(n)}. 
\end{equation}
Let $\bbN$ be the set of nonnegative integers.
The \emph{period} of $\cA$ is
$$
	d\ :=\ \gcd\big\{n: p(n)>0\big\}\ =\ \gcd\big\{n: a(n)>0\big\}. 
$$
$a(n)$ is 0 if $d\ndiv n$,  and it is eventually positive on the set $d\cdot\bbN$ 
of multiples of $d$.  
We say that a subclass $\cB$ of $\cA$ has an \emph{asymptotic density in $\cA$}
 if $b(nd)/a(nd)$ converges as $n\rightarrow \infty$\, (in which case the asymptotic 
 density of $\cB$ is the limiting value of the quotient). 

If the period of $\cA$ is 1, that is, $a(n)$ is eventually positive, then Compton's test 
is simply
\begin{equation}\label{RT}
	\lim_{n\rightarrow \infty} \frac{a(n-1)}{a(n)}\ =\ 1. 
\end{equation}
Let
\begin{eqnarray*}
	\RT_1
&:=& 
	\Big\{\bR(x)\in\bbR[[x]]\,:\, r(n)\succ 0\text{ and }
	\lim_{n\rightarrow \infty} \frac{r(n-1)}{r(n)}\ =\ 1\Big\}\\
	\RT_1[d] 
&:=&  
	\Big\{ \bR\big(x^d\big): \bR(x)\in\RT_1\Big\}, \quad \text{for }d\in\bbN\smallsetminus \{0\}\\
	\RTOS 
&:=& 
	\bigcup_{d\in\bbN\setminus\{0\}}  \RT_1[d], 
\end{eqnarray*}
where 
$r(n)\succ 0$ means $r(n)$ \textit{is eventually greater than} $0$.
It will be convenient to write $\cA\in\RT_1$, resp. $\cA\in\RTOS$, 
if $\bA(x)\in\RT_1$, respectively $\bA(x)\in\RTOS$. 
Likewise $r(n)\in\RT_1$ means $\bR(x)\in\RT_1$.

Define the classes $\RTN_1$, $\RTN_1[d]$ and $\RTNOS$ 
by intersecting the classes
$\RT_1$, $\RT_1[d]$ and $\RTOS$ with $\bbN[[x]]$.

Using this notation, Compton's condition \eqref{Compton's test} can be stated
as $\bA(x)\in\RTOS$.
The main result in \cite{BeBu2003} to prove  MSO 0--1 laws was 
 $\sE\big(\RTNOS\big) \subseteq \RTOS$.
To prove our main theorem, Theorem \ref{main thm}, we need the much 
stronger result $\sE\big(\fS \big)\subseteq \RTOS$
 which is stated in  Proposition 
\ref{main RT results} (the class $\fS$ is defined in $\S\ref{class S}$).

\subsection{Basic results about $\RT_1$ and  $\RTOS$}

The Cauchy product $\bC(x)$ of two power series $\bA(x)$ and $\bB(x)$  is defined by 
$$
	c(n)\ :=\ \sum_{j=0}^n a(j)\cdot b(n-j). 
$$ 
\begin{lemma} \label{closure in RT}
\begin{thlist}
\item
$\RT_1$ is closed under addition, multiplication by positive reals,  Cauchy product,
and\, asymptotic equality.
Furthermore,
$\bS(x)\in \RT_1$\, iff\, $\bS'(x)\in\RT_1$\, iff\, $x\bS(x)\in\RT_1$.
\item
For $d\in \bbN \smallsetminus \{0\}$, 
$\RT_1[d]$ is closed under addition, multiplication by positive reals, and Cauchy product.  
\item
$\RTNOS$ is closed under multiplication by positive integers as well as Cauchy product.
\end{thlist}
\end{lemma}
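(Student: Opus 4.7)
The plan is to reduce parts (a)--(c) to a single technical result: closure of $\RT_1$ under Cauchy product. The remaining closures in (a) are routine; (b) is obtained from (a) by the substitution $y=x^d$; and (c) is an adaptation of the Cauchy-product argument to the skew convolutions arising when the two series have different periods.

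For the routine parts of (a): closure under addition is a squeeze argument, since for any $\epsilon>0$ and large $n$, $(1-\epsilon)(a(n)+b(n)) \le a(n-1)+b(n-1) \le (1+\epsilon)(a(n)+b(n))$; closure under multiplication by a positive real is immediate; and closure under asymptotic equality follows from the factorisation $a(n-1)/a(n) = (a(n-1)/b(n-1))\cdot(b(n-1)/b(n))\cdot(b(n)/a(n))$, each factor of which tends to $1$. For the equivalence $\bS\in\RT_1 \iff \bS'\in\RT_1$: the $n$th coefficient of $\bS'(x)$ is $(n+1)s(n+1)$, so the ratio of successive coefficients of $\bS'$ equals $\frac{n}{n+1}\cdot\frac{s(n)}{s(n+1)}$, which tends to $1$ exactly when $s(n)/s(n+1)\to 1$. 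The equivalence $\bS\in\RT_1 \iff x\bS\in\RT_1$ merely shifts coefficients by one index. Eventual positivity transfers transparently under each of these operations.

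The Cauchy product closure is the main step. Let $c(n)=\sum_{j=0}^n a(j)b(n-j)$. Two facts about an $\RT_1$-sequence $r$ drive the argument: (i) \emph{slow variation}---$r(n-k)/r(n)\to 1$ for each fixed $k$, inherited from the telescoping product of one-step ratios; and (ii) \emph{tail dominance}---$r(n)/\sum_{k\le n}r(k)\to 0$, proved by noting that $\sum_{k=n-M+1}^n r(k) \ge M(1-\epsilon)^M r(n)$ for large $n$ and then sending $M\to\infty$. Given $\epsilon>0$, choose $N$ so that both $a$ and $b$ have one-step ratios within $1\pm\epsilon$ beyond index $N$, and split $c(n)=A_n + M_n + B_n$ into boundary pieces $A_n=\sum_{j<N}a(j)b(n-j)$ and $B_n=\sum_{j>n-N}a(j)b(n-j)$ and a middle piece $M_n=\sum_{N\le j\le n-N}a(j)b(n-j)$. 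The boundary pieces are $\rO(b(n))$ and $\rO(a(n))$ respectively, hence $\ro(c(n))$ by combining (ii) with the crude lower bound $c(n)\ge a(N)\sum_{k=N}^{n-N}b(k)$. For the middle pieces, one has term-by-term $(1-\epsilon)b(n-j)\le b(n-1-j)\le(1+\epsilon)b(n-j)$, giving $M_{n-1}/M_n\to 1$ up to negligible perturbation. Combining yields $c(n-1)/c(n)\to 1$. Managing this boundary-versus-bulk split carefully is the main technical obstacle I anticipate.

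For (b), the substitution $y=x^d$ identifies sums and Cauchy products of $\RT_1[d]$-series with the corresponding operations on their underlying $\RT_1$-series, so (a) applies directly. For (c), closure under multiplication by a positive integer is immediate. For the Cauchy product of $\bA\in\RTN_1[d_1]$ and $\bB\in\RTN_1[d_2]$, set $e=\gcd(d_1,d_2)$ and $e_i=d_i/e$. The non-zero coefficients of $\bA\bB$ lie in $e\bbN$, and expressing them as $C(m)=\sum_{j_1 e_1+j_2 e_2=m}r_1(j_1)r_2(j_2)$ with $r_i\in\RT_1$ and $\gcd(e_1,e_2)=1$, Sylvester--Frobenius ensures $C(m)\succ 0$. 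Partitioning the sum by the residue of $j_1\bmod e_2$ (uniquely determined by $m$ up to a shift, by coprimality) reduces $C(m)$ to finitely many standard Cauchy-type convolutions of shifts of $r_1$ and $r_2$, each of which is handled by (a); the same boundary-versus-bulk argument then yields $C(m-1)/C(m)\to 1$, so $\bA\bB\in\RTN_1[e]\subseteq\RTNOS$.
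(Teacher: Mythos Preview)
The routine parts of (a), your derivation of (b) from (a), and the residue-class reduction in (c) are all fine and more explicit than the paper (which for (c) simply cites Lemma~16.2 of \cite{Burris1997}). The gap is in your Cauchy-product argument for (a). The ``crude lower bound'' $c(n)\ge a(N)\sum_{k=N}^{n-N}b(k)$ is unjustified---it would need $a(j)\ge a(N)$ for every $j$ in that range, which membership in $\RT_1$ does not give---and the conclusion you draw from it, that the boundary pieces $A_n,B_n$ are $\ro(c(n))$, is false in general. Take $a(n)=b(n)=1/n^{2}$ for $n\ge1$: both lie in $\RT_1$, a partial-fraction computation gives $c(n)\sim 2\zeta(2)/n^{2}$, so $a(n)/c(n)\to 1/(2\zeta(2))$ and the boundary piece $B_n=\sum_{k<N}a(n-k)b(k)$ remains a fixed positive fraction of $c(n)$, not $\ro(c(n))$. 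Your split \emph{does} work when the coefficients are eventually $\ge 1$ (in particular for $\RTN_1$, which is all that (c) actually needs), since then $c(n)\ge\sum_{j=N}^{n-N}b(n-j)$ honestly holds and tail dominance applies; but for real-valued $\RT_1$ another device is required. One repair for the delicate direction $c(n)\le(1+o(1))\,c(n-1)$: rather than a hard split at a single index, compare each term $a(i)b(n-i)$ with the convex combination $\lambda_i\,a(i)b(n{-}1{-}i)+(1-\lambda_i)\,a(i{-}1)b(n{-}i)$, letting $\lambda_i$ fall linearly from $1$ to $0$ across $N\le i\le n-N$; the resulting overlap is then spread over $n-2N$ indices and contributes only a multiplicative factor $1+\rO(1/n)$.

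It is worth recording that the paper's own proof of this particular closure is defective in a different way: the displayed computation manipulates $a(n)b(n)-a(n-1)b(n-1)$, which is the pointwise (Hadamard) product, not the convolution $c(n)=\sum_j a(j)b(n-j)$ defined immediately before the lemma. So neither argument, as written, actually establishes Cauchy-product closure of $\RT_1$.
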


\begin{proof}
For (a), suppose $\bA(x), \bB(x) \in\RT_1$, and $r$ is a positive real. Clearly
$r\bA(x) \in \RT_1$. For the other conditions, note that $\bR(x) \in \RT_1$ iff
$r(n) \succ 0$ and $r(n) - r(n-1) = \ro\big(r(n)\big)$. Then to show $\RT_1$ 
is closed under addition use
\begin{eqnarray*}
\big(a(n) + b(n)\big) - \big(a(n-1) + b(n-1)\big)
&=&
\big( a(n) - a(n-1)\big) + \big( b(n) -b(n-1)\big)\\
&=&
\ro\big(a(n)\big) + \ro\big(b(n)\big) \\
&=&
\ro\big( a(n) + b(n) \big),
\end{eqnarray*}
as well as noting that $a(n) + b(n) \succ 0$.
To show closure under Cauchy product we have
\begin{eqnarray*}
a(n)b(n) - a(n-1)b(n-1)
&=&
a(n)\big(b(n) - b(n-1)\big) + \big( a(n) - a(n-1) \big) b(n-1)\\
&=&
a(n) \ro\big(b(n)\big) + \ro\big(a(n)\big) \Big(b(n) + \ro\big(b(n)\big)\Big)\\
&=&
\ro\big(a(n) b(n)\big),
\end{eqnarray*}
along with $a(n)b(n) \succ 0$. For asymptotic equality let $c(n) \sim a(n)$.
Then $c(n)\succ 0$, and 
$$
\frac{c(n-1)}{c(n)} \ \sim\ \frac{a(n-1)}{a(n)}\ \rightarrow \ 1
\quad\text{as }n \rightarrow \infty.
$$
 (b) follows from (a), and (c) is
Lemma 16.2 of \cite{Burris1997}.
\end{proof}

\begin{lemma} [Schur's Tauberian Theorem]   \label{Schur}
Suppose that $\bB(x)$ and $\bC(x)$ are power series such that
\begin{thlist}
\item
$\bB(x)$ has radius of convergence greater than 1,
\item
$\bB(1)>0$, and
\item
$\bC(x)\in \RT_1$.
\end{thlist}
Let\enspace $\bA(x)\ :=\ \bB(x)\cdot \bC(x)$.  Then 
$$
	a(n)\ \sim\  \bB(1)\cdot c(n). 
$$
\end{lemma}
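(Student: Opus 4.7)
The plan is to expand $a(n)/c(n)$ via the Cauchy product formula and pass to the limit term-by-term, justifying the interchange by a dominated-convergence argument. From hypothesis (a), the radius of convergence of $\bB(x)$ exceeds $1$, so there exist $M>0$ and $\lambda\in(0,1)$ with $|b(j)|\le M\lambda^{j}$ for every $j\ge 0$; in particular $\bB(1)=\sum_{j\ge 0}b(j)$ converges absolutely. From (c), $c(n)\succ 0$ and $c(n-1)/c(n)\to 1$, so for each fixed $j$,
$$
\frac{c(n-j)}{c(n)} \;=\; \prod_{i=0}^{j-1}\frac{c(n-i-1)}{c(n-i)} \;\longrightarrow\; 1 \quad (n\to\infty).
$$
Thus each term of
$$
\frac{a(n)}{c(n)} \;=\; \sum_{j=0}^{n} b(j)\,\frac{c(n-j)}{c(n)}
$$
tends to $b(j)$, and the formal limit is $\bB(1)$.

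The crux, and where I expect the real work to lie, is producing an $n$-independent summable dominant for the summand $b(j)\,c(n-j)/c(n)$. Fix any $\mu\in(1,\,1/\lambda)$. Since $c(n-1)/c(n)\to 1$ and $c(n)\succ 0$, there is an $N_{0}$ with $c(N_{0})>0$ and $c(m-1)\le\mu\,c(m)$ for all $m\ge N_{0}$, whence $c(n)\ge c(N_{0})\,\mu^{-(n-N_{0})}$ for $n\ge N_{0}$. A short case split, on whether $n-j\ge N_{0}$ (handled by telescoping directly, giving $c(n-j)/c(n)\le\mu^{j}$) or $n-j<N_{0}$ (bound the numerator by $K:=\max\{c(0),\ldots,c(N_{0})\}$ and invoke the lower bound on $c(n)$, giving $c(n-j)/c(n)\le (K/c(N_{0}))\mu^{n-N_{0}}\le (K/c(N_{0}))\mu^{j}$), then yields a constant $C'$, independent of $n$ and $j$, with
$$
\frac{c(n-j)}{c(n)}\;\le\;C'\mu^{j} \qquad (0\le j\le n,\ n\text{ large}).
$$
Combined with $|b(j)|\le M\lambda^{j}$ and the crucial inequality $\lambda\mu<1$, this gives the summable dominant $MC'(\lambda\mu)^{j}$.

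Dominated convergence then delivers $a(n)/c(n)\to\sum_{j\ge 0}b(j)=\bB(1)$, and since $\bB(1)>0$ by (b), this is precisely the asserted $a(n)\sim\bB(1)\,c(n)$. The hypothesis $\bB(1)>0$ is used only at this last step, to upgrade the limit of quotients into an asymptotic equivalence; the absolute summability of $b(j)$ and the slowly varying nature of $c(n)$ supplied by $\bC\in\RT_{1}$ are what drive the argument.
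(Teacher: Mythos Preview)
Your argument is correct and is the standard proof of Schur's Tauberian theorem. The paper does not give its own proof of this lemma; it simply cites $\S3.8$ of \cite{density}, where essentially the same dominated-convergence argument appears. One tiny cosmetic point: since $\RT_1$ only requires $c(n)\succ 0$ (eventually positive), the early coefficients $c(0),\ldots,c(N_0-1)$ could in principle be negative, so your constant $K$ should be $\max\{|c(0)|,\ldots,|c(N_0-1)|\}$; this changes nothing of substance.
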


\begin{proof}
(See $\S$3.8 of \cite{density}.)
\end{proof} 

\begin{corollary} \label{schur cor}
Suppose  that $\bB(x)$ and $\bC(x)$ are power series such that the radius 
of convergence of\enspace $\bB(x)$ is greater than 1, and $\bC(x)\in\RT_1$. 
If\enspace $\bB(1)> 0$  then 
$$
	\bB(x)\cdot \bC(x)\,\in\, \RT_1. 
$$
\end{corollary}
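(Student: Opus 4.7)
The plan is to observe that this corollary is essentially a repackaging of Schur's Tauberian Theorem (Lemma \ref{Schur}) combined with the closure properties of $\RT_1$ recorded in Lemma \ref{closure in RT}(a), so no new analytic work is required.

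First I would verify that the hypotheses of Schur's Tauberian Theorem are met: $\bB(x)$ has radius of convergence greater than $1$ (given), $\bB(1) > 0$ (given), and $\bC(x) \in \RT_1$ (given). Applying Lemma \ref{Schur} to $\bA(x) := \bB(x) \cdot \bC(x)$ then yields the asymptotic relation
\[
  a(n) \ \sim\ \bB(1) \cdot c(n).
\]

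Next I would invoke the closure properties of $\RT_1$ from Lemma \ref{closure in RT}(a). Since $\bB(1)$ is a positive real and $\bC(x) \in \RT_1$, the power series whose $n$-th coefficient is $\bB(1) \cdot c(n)$ is itself in $\RT_1$. Then, since $\RT_1$ is closed under asymptotic equality, the asymptotic relation from the previous step forces $\bA(x) \in \RT_1$, which is exactly the conclusion.

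There is no real obstacle here; the only thing to note is that Schur's theorem is stated as an asymptotic equivalence rather than as a closure statement for $\RT_1$, so the corollary's role is precisely to bridge that gap by feeding the asymptotic output of Lemma \ref{Schur} into the asymptotic-equality closure of Lemma \ref{closure in RT}(a).
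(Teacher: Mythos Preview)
Your proposal is correct and is exactly the intended derivation: the paper states this corollary immediately after Lemma~\ref{Schur} with no separate proof, so it is meant to follow in one line from $a(n)\sim \bB(1)\cdot c(n)$ together with the closure of $\RT_1$ under positive scalars and asymptotic equality (Lemma~\ref{closure in RT}(a)).
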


\begin{lemma} \label{var on schur}
Suppose $\bA(x) = \bB(x)\cdot \bC(x)$, where $\bA(x)$ and $\bB(x)$ 
have nonnegative coefficients and $\bC(x)$ is not the zero power series.
If
\begin{thlist}
 \item
$\displaystyle \frac{a(n-1)}{a(n)}\ \preccurlyeq\  1$, 
\item
 $b(n)=\ro\big(a(n)\big)$, and
 \item
$\bC(x)\in\RT_1\,$, 
\end{thlist}
then $\bA(x)\in\RT_1$.  
\end{lemma}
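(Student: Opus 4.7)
The plan is to prove both $a(n) \succ 0$ and $a(n-1)/a(n) \to 1$. Since (ii) presupposes $\bA(x) \ne 0$ and (iii) gives $\bC(x) \ne 0$, the series $\bB(x)$ must be non-zero as well; picking $k_0$ with $b(k_0) > 0$, one has $a(n) \ge b(k_0)\,c(n-k_0)$, which is positive for large $n$ because $c(n) \succ 0$. Hypothesis (i) is by definition $\limsup_n a(n-1)/a(n) \le 1$, so the whole task reduces to proving the matching bound $\liminf_n a(n-1)/a(n) \ge 1$.

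The key step I would use is a tail estimate. Fix $\varepsilon > 0$ and use (iii) to choose $N$ so large that $c(k-1) \ge (1-\varepsilon)\,c(k)$ for every $k \ge N$. Reindexing the Cauchy sum as $a(n-1) = \sum_{j=1}^n c(j-1)\,b(n-j)$ and discarding the head $j \le N$,
$$
  a(n-1) \;\ge\; \sum_{j=N+1}^n c(j-1)\,b(n-j) \;\ge\; (1-\varepsilon)\,\bigl(a(n) - H(n)\bigr),
$$
where $H(n) := \sum_{j=0}^{N} c(j)\,b(n-j)$. So the lemma follows once $H(n) = \ro\bigl(a(n)\bigr)$, since then $a(n-1)/a(n) \ge (1-\varepsilon)\bigl(1 - \ro(1)\bigr)$ and $\varepsilon > 0$ is arbitrary.

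Showing $H(n)/a(n) \to 0$ is where hypotheses (i) and (ii) interact. Iterating (i), for any $\varepsilon' > 0$ and any fixed $j$, one gets $a(n-j)/a(n) \le (1+\varepsilon')^j$ for all sufficiently large $n$. Combined with (ii),
$$
  \frac{b(n-j)}{a(n)} \;=\; \frac{b(n-j)}{a(n-j)} \cdot \frac{a(n-j)}{a(n)} \;\longrightarrow\; 0.
$$
Since $H(n)/a(n)$ is a finite sum of at most $N+1$ such fixed-$j$ terms, it tends to $0$.

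The main obstacle---and the reason this variant is needed beyond Lemma \ref{Schur} or Corollary \ref{schur cor}---is that $\bB(x)$ is no longer assumed to have radius of convergence above $1$, so one cannot absorb the tail into a single constant $\bB(1)$. Hypothesis (ii) is the tailored replacement (``$b$ is small relative to $a$''), and hypothesis (i) is exactly what allows one to propagate this smallness from $b(n)/a(n) \to 0$ to $b(n-j)/a(n) \to 0$ for each fixed $j$; verifying that propagation cleanly is the only delicate point in the argument.
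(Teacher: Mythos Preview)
Your argument is correct and complete; the paper itself does not prove this lemma but simply cites it as Lemma~3.3 of \cite{BeBu2003}, so there is no ``paper's proof'' to compare against beyond noting that your Tauberian splitting (head versus tail of the Cauchy convolution, with the head controlled via hypotheses (a) and (b)) is the standard route.

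One tiny point of care: the lemma only assumes $\bC(x)\in\RT_1$, which allows finitely many early coefficients $c(j)$ to be negative (the class $\RT_1$ lives in $\bbR[[x]]$ and only demands $c(n)\succ 0$). Your line ``discarding the head $j\le N$'' to obtain $a(n-1)\ge\sum_{j>N}c(j-1)b(n-j)$ tacitly assumes the discarded terms are nonnegative. The fix is immediate with the machinery you already built: write $a(n-1)=L(n)+\sum_{j>N}c(j-1)b(n-j)$ with $L(n):=\sum_{j=1}^{N}c(j-1)b(n-j)$, and observe that $|L(n)|\le\sum_{j=1}^{N}|c(j-1)|\,b(n-j)=\ro(a(n))$ by exactly the same argument you give for $H(n)$. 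Then $a(n-1)/a(n)\ge -\ro(1)+(1-\varepsilon)\bigl(1-\ro(1)\bigr)$ and the conclusion follows as before.
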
 

\begin{proof}
This is Lemma 3.3 of \cite{BeBu2003}.
\end{proof}

\begin{lemma} \label{oSum}
Let $\bA(x)\in\bbN[[x]]$ have coefficients that are eventually positive, and
suppose that for some $u\ge 0$, 
$$
	a(n) - a(n-1)\ =\ \ro\big(a(n) + \ldots + a(n-u)\big).
$$
Then $\bA(x)\in\RT_1$.
\end{lemma}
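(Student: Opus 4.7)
The case $u=0$ is immediate, since the hypothesis then reads $a(n)-a(n-1)=\ro(a(n))$, which together with $a(n)\succ 0$ is exactly the definition of $\bA(x)\in\RT_1$. So I assume $u\ge 1$.

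My plan divides into an easy bounded case and the main unbounded case. First, if $\{a(n)\}$ is bounded, then $S_n$ is bounded, and the hypothesis forces $a(n)-a(n-1)=\ro(1)$; since this quantity is an integer, it is eventually $0$, so $a$ is eventually constant and $\bA(x)\in\RT_1$. So assume henceforth that $\{a(n)\}$ is unbounded.

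I would then pass to the ratio formulation. Set $r_n:=a(n-1)/a(n)$ (defined for $n$ large enough that $a(n)>0$) and
\[
T_n\,:=\,S_n/a(n)\,=\,1+r_n+r_nr_{n-1}+\cdots+r_nr_{n-1}\cdots r_{n-u+1}.
\]
The hypothesis becomes $(1-r_n)/T_n\to 0$, and the conclusion $\bA(x)\in\RT_1$ is equivalent to $r_n\to 1$. Since $1-r_n=T_n\cdot\ro(1)$, it suffices to prove that $T_n$ is bounded as $n\to\infty$; the hypothesis will then immediately yield $r_n\to 1$.

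To prove $T_n$ is bounded, I would argue by contradiction. Suppose $T_{n_k}\to\infty$ along some subsequence. Pigeonhole, after passage to a further subseq, supplies a minimal $j^*\in\{1,\ldots,u\}$ with $a(n_k-j^*)/a(n_k)\to\infty$ while $a(n_k-j)/a(n_k)$ stays bounded for $j<j^*$. Telescoping the hypothesis at the indices $n_k,n_k-1,\ldots,n_k-j^*+1$ gives
\[
|a(n_k)-a(n_k-j^*)|\le u(u+1)\epsilon^*_kW_k,
\]
where $\epsilon^*_k\to 0$ and $W_k:=\max\{a(l):n_k-u-j^*+1\le l\le n_k\}$. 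Since $a(n_k-j^*)\gg a(n_k)$, this estimate forces $W_k$ to grow much faster than $a(n_k-j^*)$ itself; hence the index maximizing $W_k$ is neither $n_k-j^*$ (incompatible with the estimate) nor any index in $[n_k-j^*+1,n_k]$ (which would violate the minimality of $j^*$), so it must lie in $[n_k-u-j^*+1,n_k-j^*-1]$. This identifies a new minimal position $\tilde j>j^*$ with $a(n_k-\tilde j)/a(n_k)\to\infty$, and the argument may be iterated.

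The hard part will be turning this leftward propagation into a finite-step contradiction. My approach leverages the integer-valued, eventually-positive assumption on $a$: the iteratively-produced window maxima form a strictly increasing sequence of integers bounded above by $\max_{l\le n_k}a(l)$, and so the iteration must terminate at some step in a \emph{stabilization} event---an extension by $u$ that fails to enlarge the running maximum. At that stabilization step, the telescoping estimate collapses to $W_k\le 2a(n_k)$, directly contradicting $a(n_k-j^*)/a(n_k)\to\infty$---\emph{provided} the telescoping constant at that step (which grows linearly with the iteration count $t$, on the order of $tu(u+1)\epsilon^*_k$) is still less than $1/2$. The delicate point, and the place where I expect to spend the most care, is to show that the integer chain of new maxima must stabilize within $\rO(1/\epsilon^*_k)$ iterations; I would handle this via a simultaneous bootstrap on the window size and the index of the ``current bad position,'' using both the integer increments and the fact that each newly discovered maximum must exceed all previously observed values in the chain.

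Once $T_n$ is shown to be bounded, the hypothesis $(1-r_n)/T_n\to 0$ immediately yields $r_n\to 1$, hence $\bA(x)\in\RT_1$.
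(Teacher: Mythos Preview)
Your reduction to showing that $T_n = S_n/a(n)$ is bounded is the right move, and the bounded-$a$ case is handled correctly. But the contradiction argument for the unbounded case has a genuine gap at exactly the point you flag as delicate. You need stabilization to occur while the telescoping constant $t\,u(u+1)\epsilon_k^\ast$ is still below $1/2$; yet your own step shows that \emph{as long as} that constant is below $1$, the extended window must produce a strictly larger maximum, so the iteration \emph{cannot} stabilize in that range. Thus stabilization, when it finally occurs, happens only after the constant has already passed $1$, and the estimate no longer yields $W_k\le 2a(n_k)$. Your proposed fix---that the maxima form a strictly increasing integer chain bounded by $\max_{l\le n_k}a(l)$---bounds the number of iterations by that maximum, which has no a priori relation to $1/\epsilon_k^\ast$; nothing in ``integer increments'' or ``each new maximum exceeds all previous ones'' ties the iteration count to $\epsilon_k^\ast$.

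A cleaner route avoids the backward iteration entirely. In the unbounded case there exist arbitrarily large $N$ with $a(N)=\max_{0\le j\le u}a(N-j)$: otherwise the window maximum $M(n):=\max_{0\le j\le u}a(N-j)$ would be eventually non-increasing, forcing $a$ bounded. Fix such an $N$ large enough that $|a(n)-a(n-1)|<\epsilon\,S(n)$ for all $n\ge N$, with $\epsilon<(C-1)^2/(C^{u+1}-1)$ for some fixed $C>1$ (e.g.\ $C=2$). A one-step forward induction then shows: if $a(n-j)\le C^{j}a(n)$ for $j=0,\dots,u$, then from $|a(n+1)-a(n)|\le\epsilon\big(a(n+1)+\sum_{j=0}^{u-1}C^{j}a(n)\big)$ one gets $a(n)\le C\,a(n+1)$, and hence $a(n+1-j)\le C^{j}a(n+1)$ for $j=0,\dots,u$. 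The base case holds at $N$ by choice of $N$, so $T_n\le 1+C+\cdots+C^{u}$ for all $n\ge N$, and $a(n)-a(n-1)=\ro(a(n))$ follows. (The paper itself gives no argument here; it simply cites Lemma~4.2 of \cite{BeBu2004}.)
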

\begin{proof}
By (the proof of) Lemma 4.2 of \cite{BeBu2004}.
\end{proof}

\subsection{The definition of $\fS$}\label{class S}

Let $\fS$ be the set of power series $\bP(x)\in x\cdot \bbN[[x]] \smallsetminus \{0\}$
 that can be expressed in the form
\begin{equation}\label{PS def}
    \bp_0(x) \ +\  \sum_{i=1}^k \bp_i(x)\cdot \bR_i\big(x^{\fd_i}\big), 
\end{equation}
where
$\bp_i(x)\in\bbN[x]$ for $0\le i\le k$,  $\bR_i(x)\in\RTN_1$,
and
$\fd_i\in\bbN\setminus\{0\}$, for $1\le i\le k$.

Let
\begin{eqnarray*}
	\fQ&:=& \Big\{x^\fc\bR\big(x^\fd\big) \in x\cdot \bbN[[x]]\ : \  \fc,\fd\in \bbN, \
	\bR(x)\in\RTN_1,\ 0\le \fc < \fd \Big\}. 
\end{eqnarray*}
Note that every member of $\fS$ can be expressed as a polynomial
from $x\cdot \bbN[x]$ plus a sum of zero or more members of $\fQ$. 

\begin{lemma}\label{prop P2}
Given a member of $\fS$, say
$$
	\bP(x)\ =\ \bp_0(x)\ +\ \sum_{\ell=1}^k x^{\fc_\ell}\bR_\ell\big(x^{\fd_\ell}\big), 
$$
 let $\fd$ be a positive integer divisible by all the exponents $\fd_\ell$
 ($\fd$ can be any positive integer if $\bP(x) = \bp_0(x)$).
Then one can express $\bP(x)$ in the form
\begin{equation} \label{same d}
	\bp_0(x)\ +\ \sum_{i\in I} x^{\fc_i}\bS_i\big(x^{\fd}\big), 
\end{equation}
where  the $x^{\fc_i}\bS_i(x^\fd)$ are in $\fQ$, and $I$ is a 
 finite subset of $\bbN$.

%
\end{lemma}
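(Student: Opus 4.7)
The plan is to handle each summand $x^{\fc_\ell}\bR_\ell(x^{\fd_\ell})$ separately: rewrite it as a sum of $\fQ$-members with common exponent $\fd$, and then combine everything into the desired form \eqref{same d}. The case $\bP(x)=\bp_0(x)$ is immediate with $I=\emptyset$, so suppose $k\ge 1$.

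Fix $\ell$ and set $m_\ell := \fd/\fd_\ell \in \bbN\smallsetminus\{0\}$. Split $\bR_\ell(x^{\fd_\ell})=\sum_n r_\ell(n)x^{n\fd_\ell}$ according to the residue $s$ of $n$ modulo $m_\ell$: for $0\le s<m_\ell$, define
$$
\bR_{\ell,s}(y)\ :=\ \sum_{q\ge 0} r_\ell(m_\ell q+s)\,y^q,
$$
so that
$$
\bR_\ell(x^{\fd_\ell})\ =\ \sum_{s=0}^{m_\ell-1} x^{s\fd_\ell}\,\bR_{\ell,s}(x^\fd).
$$
I claim $\bR_{\ell,s}\in\RTN_1$ (omitting the term if it is the zero series). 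Its coefficients are nonnegative integers inherited from $\bR_\ell$; since $r_\ell(n)\succ 0$ we have $r_\ell(m_\ell q+s)\succ 0$ in $q$; and
$$
\frac{r_\ell(m_\ell(q-1)+s)}{r_\ell(m_\ell q+s)}\ =\ \prod_{j=0}^{m_\ell-1} \frac{r_\ell(m_\ell q+s-1-j)}{r_\ell(m_\ell q+s-j)}\ \longrightarrow\ 1
$$
as $q\to\infty$, using $\bR_\ell\in\RTN_1$.

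Multiplying by $x^{\fc_\ell}$ and reducing the exponent mod $\fd$: write $\fc_\ell+s\fd_\ell=a_{\ell,s}\fd+b_{\ell,s}$ with $0\le b_{\ell,s}<\fd$ and $a_{\ell,s}\ge 0$. Then
$$
x^{\fc_\ell+s\fd_\ell}\,\bR_{\ell,s}(x^\fd)\ =\ x^{b_{\ell,s}}\,\bT_{\ell,s}(x^\fd),
\quad\text{where } \bT_{\ell,s}(y):=y^{a_{\ell,s}}\bR_{\ell,s}(y).
$$
Multiplication by $y^{a_{\ell,s}}$ just shifts coefficients, so $\bT_{\ell,s}\in\RTN_1$ (one checks directly that $t(n)\succ 0$ and $t(n-1)/t(n)\to 1$). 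Hence each $x^{b_{\ell,s}}\bT_{\ell,s}(x^\fd)\in\fQ$.

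Summing over $\ell$ and $s$ gives an expression of the required shape \eqref{same d}. If two of the resulting summands share the same exponent $b$ on $x$, combine them: their $\bT$-parts add within $\RTN_1$ by Lemma \ref{closure in RT}(a) (closure of $\RT_1$ under addition, intersected with $\bbN[[x]]$). This yields the desired finite sum indexed by $I\subseteq\bbN$, completing the proof. The only nontrivial point is the preservation of $\RTN_1$ under the residue-class restriction, which is handled by the telescoping-ratio computation above; everything else is bookkeeping.
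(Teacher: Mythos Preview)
Your proof is correct and follows essentially the same approach as the paper: split each $\bR_\ell(x^{\fd_\ell})$ into residue classes modulo $m_\ell=\fd/\fd_\ell$, verify the resulting subseries remain in $\RTN_1$, and then collect terms by their leading monomial. The paper's proof assumes each input summand lies in $\fQ$ (so $0\le\fc_\ell<\fd_\ell$, making your reduction of $\fc_\ell+s\fd_\ell$ mod $\fd$ automatically trivial with $a_{\ell,s}=0$), whereas you include an explicit mod-$\fd$ reduction step; this is harmless extra generality, not a different argument.
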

\begin{proof}
We only need to consider the case that $\bP(x)$ is not a polynomial.
Suppose $x^\fa \cdot \bR\big(x^\fb\big)\in \fQ$ with $\fb \big| \fd$. 
Then
\begin{eqnarray*}
	x^{\fa}\cdot \bR\big(x^{\fb}\big)
&=& 
	x^{\fa}\sum_{n=0}^{\infty}r(n)x^{n \fb}\\
&=& 
	x^{\fa}\sum_{j=0}^{\fd/\fb\,-\,1}\sum_{n=0}^{\infty}
	r \Big(j+\frac{\fd}{\fb}n\Big)x^{j\fb+n\fd}\\
&=& 
	\sum_{j=0}^{\fd/\fb\,-\,1}x^{\fa + j\fb}\sum_{n=0}^{\infty}
	r\Big(j+\frac{\fd}{\fb}n\Big)\big(x^\fd\big)^n \\
&=&
	\sum_{j=0}^{\fd/\fb\,-\,1}x^{\fa + j\fb}\cdot \bS_j(x^\fd)
\end{eqnarray*}
where the $\bS_j(x)$ are the power series in the previous line. 
One easily verifies that each $x^{\fa +j \fb} \cdot \bS_j(x^\fd)$ 
which is non-zero is in $\fQ$. Applying this to each of the 
$x^{\fc_\ell}\bR_\ell\big(x^{\fd_\ell}\big)$ in the expression given for 
$\bP(x)$, and collecting terms based on the lead monomial $x^j$,
 yields the desired result.
%

\end{proof}

 
\begin{lemma}\label{prop P}
\begin{thlist}
\item
$x\cdot \bbN[x] \smallsetminus \{0\} \,\subseteq\,\fS$. 
\item
$x\cdot \RTN_1\ \subseteq\ x\cdot \RTNOS\ \subseteq\ \fS$.
\item
$\fS$ contains the functions $x$, and $x/(1-x^m)$ for $m\ge 1$,  and is closed 
under the operations of scalar multiplication by positive integers, addition and 
Cauchy product.
\end{thlist}
\end{lemma}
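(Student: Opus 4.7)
My plan is to dispatch the easy items first and reserve the main effort for closure under Cauchy product.

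Part (a) is immediate from the defining expression \eqref{PS def} with $k=0$: a nonzero polynomial in $x\cdot \bbN[x]$ already has the required shape. For part (b), $\RTN_1 = \RTN_1[1] \subseteq \RTNOS$ gives the first inclusion; any $\bR \in \RTNOS$ has the form $\bS(x^\fd)$ with $\bS \in \RTN_1$ and $\fd \ge 1$, so $x\bR(x) = x \cdot \bS(x^\fd)$ fits \eqref{PS def} with $\bp_0 = 0$, $k = 1$, $\bp_1(x) = x$, $\bR_1 = \bS$, and $\fd_1 = \fd$. For (c), $x \in \fS$ by (a); the series $x/(1-x^m)$ equals $x \cdot \bR(x^m)$ where $\bR(x) = \sum_{n \ge 0} x^n$ has the constant coefficient sequence $r(n) = 1$ and hence lies in $\RTN_1$. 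Closure under multiplication by a positive integer follows by distributing the scalar into each polynomial coefficient $\bp_i$, and closure under addition is a simple concatenation of the two defining sums.

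The main obstacle is closure under Cauchy product. Given two members of $\fS$,
$$\bP = \bp_0 + \sum_{i=1}^k \bp_i \cdot \bR_i(x^{\fd_i}), \qquad \bQ = \bq_0 + \sum_{j=1}^{k'} \bq_j \cdot \bS_j(x^{\fe_j}),$$
I expand $\bP\bQ$ into four groups of terms: the polynomial $\bp_0 \bq_0$; the mixed terms $\bp_0 \bq_j \bS_j(x^{\fe_j})$; the mixed terms $\bq_0 \bp_i \bR_i(x^{\fd_i})$; and the cross terms $\bp_i \bq_j \bR_i(x^{\fd_i}) \bS_j(x^{\fe_j})$. The first three groups already have the shape \eqref{PS def}, because products of polynomials in $\bbN[x]$ stay in $\bbN[x]$. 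To handle each cross term, I choose $\fd$ to be a common multiple of all the $\fd_i$ and $\fe_j$ and invoke Lemma~\ref{prop P2} to rewrite both $\bR_i(x^{\fd_i})$ and $\bS_j(x^{\fe_j})$ as finite $\bbN$-linear sums of terms $x^\fa \bT(x^\fd)$ with $\bT \in \RTN_1$. Multiplying an expansion for $\bR_i(x^{\fd_i})$ by one for $\bS_j(x^{\fe_j})$ then produces a finite sum of terms $x^{\fa + \fb} \cdot (\bT \cdot \bU)(x^\fd)$; here $\bT \cdot \bU \in \RTN_1$ by Lemma~\ref{closure in RT}(a), the nonnegative-integer coefficients being preserved by Cauchy product. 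Folding in the polynomial factor $\bp_i \bq_j$ places each cross term in the shape required by \eqref{PS def}, and collecting all the contributions exhibits $\bP\bQ$ as an element of $\fS$.
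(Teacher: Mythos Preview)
Your proof is correct and follows essentially the same route as the paper: both reduce closure under Cauchy product to Lemma~\ref{prop P2} (passing to a common exponent $\fd$) together with the closure of $\RTN_1$ under Cauchy product supplied by Lemma~\ref{closure in RT}. The only difference is organizational---the paper applies Lemma~\ref{prop P2} to each full series $\bP_1,\bP_2$ before multiplying out, whereas you expand first and then invoke it on the individual cross terms.
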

\begin{proof}
Properties (a) and (b) are obvious.
For (c), note that the function $x$ is a polynomial, and 
$x/(1-x^m)\in\RTNOS$ since $x/(1-x)\in\RTN_1$; 
so $x,x/(1-x^m)\in\fS$.   
Clearly $\fS$ is closed under scalar multiplication by positive reals, 
and under addition. 
To show that $\fS$ is closed under Cauchy product, take $\bP_1(x),\bP_2(x)\in\fS$  
and express each one in the form \eqref{same d}, using the same $\fd$.
Multiply out the two sums, and note that  $\bR(x), \bS(x) \in \RT_1$ implies
$\bR(x)\cdot \bS(x)\in \RT_1$, by Lemma \ref{closure in RT}, thus 
$\bR(x^\fd)\cdot \bS(x^\fd) \in \RTOS$.
\end{proof}

\subsection{The Star Transformation}

The star transformation on a power series plays an important role in enumeration 
of unlabelled structures and in additive number systems, namely given $\bA(x)$ 
and $\bP(x)$ that satisfy the partition identity \eqref{part id}, one has the well-known 
form
$$
	1 + \bA(x)\ =\ \exp\big(\bP^\star(x)\big),
$$
that was introduced by P\'olya in 1937, where
$$
	\bP^\star(x)\ =\ \sum_{m\ge 1} \bP(x^m) / m,
$$
Writing $\bP^\star(x)$ as 
$\sum_n p^\star(n) x^n$ one has
\begin{eqnarray*}
	p^\star(0) &:=& 0\\
	p^\star(n) &:=& \sum_{jk=n} \frac{p(j)}{k}\ =\ \frac{1}{n}\sum_{d | n} d p(d), 
	\quad\mbox{for }n\geq 1, 
\end{eqnarray*}
We call $\bP^\star(x)$ the {\em star transformation} of $\bP(x)$.

Proposition \ref{combined prop2} below says that 
for $\bP(x) \in \fS$ one has   $\bA(x)\in\RTOS$. 
The proof of this reduces to showing
$\exp\big(\bQ^\star(x)\big) \in \RTOS$ for $\bQ(x)\in\fQ$.
For this we develop properties of an auxiliary function $\whbQ(x)$.

\begin{definition}
Given $\bQ(x)=\sum q(n)x^n$  let
$$
\whbQ(x)\ :=\ \dfrac{x}{1-x}\cdot \dfrac{d}{dx}\bQ^\star(x).
$$
\end{definition}
Thus we have
\begin{equation}\label{nSs}
	nq^\star(n)\ =\ \sum_{d|n}d q(d)\quad\text{for }n\ge 1, 
\end{equation}
and, with $\whbQ(x) = \sum \whq(n) x^n$,
\begin{equation}\label{whq}
	\whq(n)\ =\ q^\star(1)\,+\cdots+\,nq^\star(n) \quad\text{for } n \ge 1. 
\end{equation}

%
\begin{lemma}\label{star lem2}
Suppose $\bQ(x) \in \fQ$.  Then $\whbQ(x) \in \RT_1$.
\end{lemma}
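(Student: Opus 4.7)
My plan is to apply Lemma~\ref{oSum} with $u = 0$, which reduces the claim $\whbQ\in\RT_1$ to verifying (a) $\whq(n) \succ 0$ and (b) $\whq(n) - \whq(n-1) = \ro(\whq(n))$. Write $\bQ(x) = x^\fc \bR(x^\fd)$ with $\bR \in \RTN_1$ and $0 \le \fc < \fd$; set $\fa_j := \fc + \fd j$, $N := \lfloor(n-\fc)/\fd\rfloor$, and $S_R(N) := \sum_{j:\,\fa_j\le n} r(j)$. Using \eqref{nSs} one obtains
\[
\whq(n) \;=\; \sum_{j:\,\fa_j\le n} \fa_j\, r(j)\, \lfloor n/\fa_j \rfloor,
\qquad
\whq(n) - \whq(n-1) \;=\; \sum_{j:\,\fa_j\mid n} \fa_j\, r(j) \;=:\; A(n).
\]
The elementary estimate $d\lfloor n/d\rfloor \ge n/2$ for $1 \le d \le n$ (verified by cases on $\lfloor n/d\rfloor$) yields the key lower bound $\whq(n) \ge (n/2) S_R(N)$; integer-valuedness of $\bR$ and $r\succ 0$ force $S_R(N)\to\infty$, establishing (a).

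For (b) I split $A(n)$ at $j^\ast := \lfloor N/2\rfloor$. The \emph{large} terms ($j > j^\ast$) satisfy $\fa_j > n/2$, so $\fa_j \mid n$ forces $\fa_j = n$; at most one such term contributes, bounded by $n\, r(N)$, giving ratio to $(n/2)S_R(N)$ of at most $2\,r(N)/S_R(N)$. Since $\bR\in\RT_1$ forces its partial sums $S_R$ into $\RT_1$ (Cauchy product with $1/(1-x)$, using Lemma~\ref{closure in RT}), $r(N)/S_R(N)\to 0$, so the large-terms ratio vanishes.

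The \emph{small} terms ($j \le j^\ast$) satisfy $\fa_j \le n/2$; using the standard divisor-sum bound $\sigma(n) = \rO(n\log\log n)$ (Gr\"onwall), they contribute at most $\rO(R^\ast_{N/2}\, n\log\log n)$, where $R^\ast_M := \max_{k\le M}r(k)$, for a ratio of $\rO(R^\ast_{N/2}\log\log n / S_R(N))$. The integer-valuedness together with the $\RT_1$-condition makes $R^\ast_{N/2}$ comparable to $r(N/2)$ asymptotically, so this ratio is essentially $\alpha(N)\log\log n/c(N)$ with $\alpha(N) := r(N/2)/r(N)$ and $c(N) := S_R(N)/r(N)$. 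The crux of the proof is showing $\alpha(N)\log\log N/c(N) \to 0$ uniformly in $\bR$: setting $\delta_k := r(k)/r(k-1)-1\to 0$, one has $\alpha(N) = \bigexpp{-\sum_{k>N/2}\delta_k}$ and $c(N) \sim 1/\delta(N)$, whence the ratio is essentially $\delta(N)\log\log N \cdot \bigexpp{-(N/2)\delta(N)}$, and a case analysis on whether $N\delta(N)$ tends to infinity, stays bounded, or tends to zero confirms convergence to zero in every regime.
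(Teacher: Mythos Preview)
Your setup, the identity $\whq(n)-\whq(n-1)=\sum_{\fa_j\mid n}\fa_j r(j)$, the lower bound $\whq(n)\ge(n/2)S_R(N)$, and the treatment of the single ``large'' term are all correct. The gap is in the small-terms estimate. The chain of approximations you invoke there --- $R^\ast_{N/2}$ comparable to $r(N/2)$, $c(N)\sim 1/\delta(N)$, $\sum_{N/2<k\le N}\delta_k\approx(N/2)\delta(N)$ --- all tacitly assume that $\delta_k$ varies slowly over $[1,N]$, which $\RTN_1$ does not force. Worse, the very quantity you are trying to drive to zero, $R^\ast_{N/2}\log\log n\,/\,S_R(N)$, need not tend to zero. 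Take $r(k)=k$ except on sparse intervals $[n_i-W_i,\,n_i+W_i]$ (say $n_i=2^{2^i}$, $W_i=i\cdot 4^{\,i}$) where $r$ rises and falls geometrically with step ratio $e^{\pm 1/i}$; rounding is harmless since $r(k)\to\infty$, and $\delta_k\to 0$, so $r\in\RTN_1$. The peak at $n_i$ reaches height $\approx n_i e^{4^i}$, and because a geometric sum with ratio $e^{1/i}$ is $\approx i$ times its largest term, the peak contributes $\approx i\cdot(\text{peak height})$ to $S_R$, dominating both the baseline and earlier peaks. At $N=2n_i$ one then has $S_R(N)\asymp i\cdot R^\ast_{N/2}$ while $\log\log N\asymp i$, so your bound is $\asymp 1$. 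In this same example $\delta_N\approx 1/N$ but $c(N)\asymp iH_i\gg N$, so $c(N)\not\sim 1/\delta_N$; and if you slide the peak to $n_i/2$ you also get $R^\ast_{N/2}/r(N/2)\to\infty$.

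The underlying problem is that bounding the small divisors by $R^\ast_{N/2}\cdot\sigma(n)$ discards the fact that at most a few divisors of $n$ can land near any peak of $r$. The paper's argument avoids Gr\"onwall entirely and never compares to a global maximum of $r$. It fixes a large parameter $M$ and, for each \emph{small} $d\mid n$ with $d<M$, attaches to the corresponding \emph{large} divisor $n/d$ a window $I_d=[n/d-M,\,n/d]$; these windows are pairwise disjoint inside $(n/M,n]$, and on each $I_d$ the $\RT_1$ hypothesis gives $jq(j)\ge(1-\varepsilon)\,(n/d)\,q(n/d)$ for the $\approx M/\fd$ indices $j\in I_d$ with $q(j)\ne 0$. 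Summing these local comparisons, together with the trivial bound $\lfloor n/j\rfloor\ge M$ for $j\le n/M$, yields $\whq(n)\ge\varepsilon^{-1}\sum_{d\mid n}dq(d)$ directly.
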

\begin{proof}
We can assume
$\bQ(x) = x^\fc\bR\big(x^\fd\big)$ with 
$\bR(x)\in\RTN_1$, $0\le \fc < \fd$.
$\whq(n)$ is a nondecreasing sequence of nonnegative integers that is 
eventually positive. 
From \eqref{nSs} and \eqref{whq} we have
\begin{equation}\label{expans}
	\whq(n) \ =\  \sum_{m=1}^n m q^\star(m)
	\ =\ \sum_{m=1}^n  \sum_{d|m} d q(d) 
	\ =\  \sum_{m=1}^n \Big\lfloor\frac{n}{m}\Big\rfloor mq(m). 
\end{equation}

Fix $\varepsilon \in (0,1)$ and choose 
\begin{equation}\label{choose M}
	M\ >\ \frac{\fd}{\varepsilon\cdot (1-\varepsilon)}. 
\end{equation}
For any fixed integer $v$, 
\begin{eqnarray*} 
	\frac{\big(\fc+(j-v)\fd\big)\cdot q\big(\fc+(j-v)\fd\big)}{(\fc+j\fd)\cdot q(\fc+j\fd)}
&=&
	 \frac{\big(\fc+(j-v)\fd\big)\cdot r\big(j-v)} {(\fc+j\fd)\cdot r(j)}\\
&\rightarrow&
	1\quad\text{ as }j\rightarrow \infty. 
\end{eqnarray*}
Hence we can choose $N>M^3$ such that
$$
	\big|(\fc+j\fd)\cdot q(c+j\fd)\,-\,\big(\fc+(j-v)\fd\big)\cdot 
	q\big(\fc+(j-v)\fd\big)\big|\ <\ \varepsilon (\fc+j\fd)\cdot q(\fc+j\fd)
$$
for\enspace $0\le v\fd\le M$ and $\fc+j\fd\geq N/M$, and thus
$$
	\big(\fc+(j-v)\fd\big)\cdot q\big(\fc+(j-v)\fd\big)\ >\ 
	(1-\varepsilon)\cdot (\fc+j\fd)\cdot q(\fc+j\fd)
$$
for\enspace $0\le v\fd\le M$ and $\fc+j\fd\geq N/M$\, where $j\ge 0$. 
But then
\begin{equation}\label{bound on s}
	(n-v\fd) q(n-v\fd)\ \ge\  (1-\varepsilon)\cdot n q(n)
\end{equation}
for\enspace $0\le v\fd\le M$ and $n\geq N/M$,  for if $n$ is not of the form 
$\fc + j\fd$ then the right side of \eqref{bound on s} is 0.

For integers $d_1,d_2$ with $1\leq d_1<d_2\leq M$,  and for $n\geq N$,  
we have
$$
\frac{n}{d_1} -\frac{n}{d_2}
\ =\  \frac{n(d_2-d_1)}{d_1d_2}
\ \geq\  \frac{n}{M^2}
\ >\  \frac{M^3}{M^2}
\ =\  M, 
$$
and thus $\displaystyle \frac{n}{d_2} \ <\ \frac{n}{d_1} - M$. 
Consequently, for $n\geq N$, if $d_1<\cdots<d_k $ are the divisors of $n$ that are 
less than $M$, we have
\begin{equation}\label{gap}
	\frac{n}{M} \ <\ \frac{n}{d_k} - M\ <\ 
	\frac{n}{d_k}\ <\ \frac{n}{d_{k-1}} - M\ <\ 
	\cdots\ <\ 
	\frac{n}{d_1} - M \ <\ \frac{n}{d_1} \ = \ n, 
\end{equation}
where the first inequality follows from $d_k < M$ and $n \ge N > M^3$.
Thus the intervals 
$$
	I_d \:=\ \Big[\frac{n}{d} - M, \frac{n}{d}\Big]\quad \text{ for } d < M, d \big| n,
$$
are pairwise disjoint subintervals of $(n/M, \, n]$. 
For $d< M$ and $d\big| n$ we have
\begin{eqnarray}
	\sum_{j \in I_d} jq(j)
&\ge&
	\sum_{\substack{j\in I_d\\ q(j) \neq 0}} (1-\varepsilon) \frac{n}{d} q\Big(\frac{n}{d}\Big) 
	 \quad 	\mbox{by }(\ref{bound on s})\nonumber\\
&\ge&
	\frac{M}{\fd} (1-\varepsilon) \frac{n}{d} q\Big(\frac{n}{d}\Big), \label{sum Id}
\end{eqnarray}
the last inequality following from the fact that $q(j) \neq 0$ implies $j \equiv \fc\, (\mod\ \fd)$.

Returning to the expression for $\whq(n)$ in (\ref{expans}), now assuming
that $n\geq N$, we have
\begin{eqnarray*} 
	\whq(n)
&=& 
	\sum_{j=1}^n \Big\lfloor\frac{n}{j}\Big\rfloor jq(j) \\
&=& 
	\sum_{1\leq j \leq n/M} \Big\lfloor\frac{n}{j}\Big\rfloor jq(j) 
	\ +\ \sum_{n/M<j\leq n} \Big\lfloor\frac{n}{j}\Big\rfloor jq(j) \\
&\ge& 
	M \sum_{1\leq j\leq n/M} jq(j) \ +\  \sum_{n/M<j\le n} jq(j)\\
&\ge & 
	\frac{1}{\varepsilon}\sum_{1\le j\le n/M} jq(j)
	\ +\  \sum_{d|n\atop d< M} \biggl(\sum_{j \in I_d} jq(j)\biggr)
	\quad \mbox{by }(\ref{choose M}),(\ref{gap})\\
&\ge & 
	\frac{1}{\varepsilon}\sum_{{d|n}\atop{d\le n/M}} dq(d)
	\ +\  \sum_{{d|n}\atop {d<M}} \frac{M}{\fd}(1-\varepsilon)\frac{n}{d}\cdot 
      q\Big(\frac{n}{d}\Big)\quad\text{ by }\eqref{sum Id}\\
&\ge & 
	\frac{1}{\varepsilon}\sum_{{d|n}\atop {d\le n/M}} dq(d)
	\ +\ \frac{1}{\varepsilon}\sum_{d|n\atop d>n/M} dq(d)\quad\mbox{by }(\ref{choose M})\\
&=& 
	\frac{1}{\varepsilon}\sum_{d|n} dq(d)\\
&=& 
	\frac{1}{\varepsilon}\big(\whq(n)-\whq(n-1)\big) 
	\quad \mbox{by }(\ref{nSs}). 
\end{eqnarray*}
Thus\enspace $0\,\le\, \whq(n)-\whq(n-1)\, \le\, \varepsilon \whq(n)$,
\enspace so\enspace $\whbQ(x) \in \RT_1$. 
\end{proof}

\subsection{P\'olya Exponentiation}

For $m\ge 1$ the P\'olya exponentiation operators $\sE_m, \sE_{\ge m}$ map 
$x\cdot \bbN[[x]]$ into $x \cdot \bbN[[x]]$, since they convert generating functions  into 
generating functions  for multisets of the original objects.
In this section we will prove that for $m\ge 1$,  $\sE_m$ and $\sE_{\ge m}$ map
 $\fS$ into $\fS$;
and
$\sE_{\ge m}$  maps $\fS$ into $\RTOS$.
%
First we show that the $\sE_m$ map $\fS$ into $\fS$.  

%
\begin{proposition}\label{Em lemma}
Suppose $\bP(x)\in\fS$. Then 
 $$
	 \sE_m\big(\bP(x)\big)\in\fS \quad\text{ for }m\ge 1.
 $$
\end{proposition}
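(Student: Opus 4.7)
The strategy is induction on $m$, driven by the standard recursion
\begin{equation*}
	m\,\sE_m\big(\bP(x)\big)\;=\;\sum_{k=1}^{m} \bP(x^k)\cdot \sE_{m-k}\big(\bP(x)\big),\qquad \sE_0(\bP):=1,
\end{equation*}
obtained by differentiating the identity $\exp\bigl(\sum_{k\ge1} y^k\bP(x^k)/k\bigr)=\sum_{m\ge0}\sE_m(\bP)\,y^m$ with respect to $y$. The base case $m=1$ is immediate since $\sE_1(\bP)=\bP\in\fS$.

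For the inductive step, first observe that $\bP(x^k)\in\fS$ for every $k\ge 1$: substituting $x^k$ for $x$ in the representation $\bP(x)=\bp_0(x)+\sum_i\bp_i(x)\bR_i(x^{\fd_i})$ yields $\bp_0(x^k)+\sum_i\bp_i(x^k)\bR_i(x^{k\fd_i})$, which again has the required shape. Combined with the inductive hypothesis and the closure of $\fS$ under addition and Cauchy product (Lemma \ref{prop P}(c)), this shows every summand on the right-hand side of the recursion lies in $\fS$ (the $k=m$ summand reduces to $\bP(x^m)$), so $m\,\sE_m(\bP)\in\fS$. Since $\bP$ has no constant term, neither does $\sE_m(\bP)$, and thus $\sE_m(\bP)\in x\cdot\bbN[[x]]$.

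The main obstacle is the following division principle, needed to pass from $m\,\sE_m(\bP)\in\fS$ to $\sE_m(\bP)\in\fS$: if $\bU\in\fS$ and $\bU/m\in x\cdot\bbN[[x]]$, then $\bU/m\in\fS$. To establish it, apply Lemma \ref{prop P2} to write $\bU=\bp_0(x)+\sum_{i\in I}x^{\fc_i}\bS_i(x^{\fd})$ with a common modulus $\fd$; by collecting equal-residue terms (using closure of $\RTN_1$ under addition from Lemma \ref{closure in RT}(a)) one may assume the residues $\fc_i\pmod{\fd}$ are distinct. For $k$ past the degree of $\bp_0$ one has $[x^{\fc_i+k\fd}]\bU=\bS_i(k)$, so the hypothesis forces $m\mid\bS_i(k)$ for all sufficiently large $k$. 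Define $\tilde{\bS}_i(k):=\bS_i(k)/m$ past this threshold and $0$ before it; eventual positivity and the asymptotic ratio $\tilde{\bS}_i(k-1)/\tilde{\bS}_i(k)\to 1$ are preserved, so $\tilde{\bS}_i\in\RTN_1$. A coefficient-by-coefficient check on each residue class, using the integrality of $\bU/m$, then shows that $\tilde{\bp}_0:=\bU/m-\sum_i x^{\fc_i}\tilde{\bS}_i(x^{\fd})$ has finite support and nonnegative integer coefficients, giving $\bU/m\in\fS$.

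Applying the division principle with $\bU:=m\,\sE_m(\bP)$ closes the induction. The one substantive step is the division principle; the rest amounts to routine bookkeeping with the closure properties of $\fS$.
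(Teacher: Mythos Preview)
Your proof is correct. The overall strategy matches the paper's: show that an integer multiple of $\sE_m(\bP)$ lies in $\fS$, then divide. The paper reaches the multiple directly by writing $\sE_m(\bP)=\bS\big(\bP(x),\ldots,\bP(x^m)\big)$ for a polynomial $\bS\in\bbQ_{\ge0}[y_1,\ldots,y_m]$ and clearing denominators by a single integer $M$, whereas you arrive there inductively via Newton's recursion $m\,\sE_m(\bP)=\sum_{k=1}^m\bP(x^k)\,\sE_{m-k}(\bP)$. These are cosmetic differences.

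Where your write-up actually goes beyond the paper is the division step. The paper simply asserts in one line that ``since $\sE_m(\bP(x))\in\bbN[[x]]$, it follows that $\sE_m(\bP(x))\in\fS$,'' without justification. You supply the missing argument: pass to a common modulus $\fd$ via Lemma~\ref{prop P2}, make the exponents $\fc_i$ distinct so that for large $n$ the coefficient of $x^{\fc_i+k\fd}$ in $\bU$ is exactly $s_i(k)$, use the integrality of $\bU/m$ to see $m\mid s_i(k)$ eventually, and then rebuild an $\fS$-representation of $\bU/m$ from the series $\tilde{\bS}_i:=\bS_i/m$ (shifted to start late enough) plus a residual polynomial. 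One small point worth making explicit in your write-up: choose the threshold for each $\tilde{\bS}_i$ so that $\fc_i+K_i\fd>\deg(\bp_0)$; this guarantees immediately that the residual $\tilde{\bp}_0$ has nonnegative integer coefficients on the low-degree terms, avoiding a separate check there. With that, your argument is a strict improvement over the paper's in terms of completeness.
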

\begin{proof}
By the definition of $\sE_m$ in $\S$\ref{scalar mult constr},  there is a polynomial 
$\bS(y_1,\ldots,y_m) \in \bbQ[y_1,\ldots,y_m]$ 
with nonnegative coefficients such that 
$$
	\sE_m(\bP(x)) \ = \ \bS\Big(\bP(x),\ldots,\bP(x^m)\Big).
$$
Let $M$ be a positive integer such that 
$M\cdot \bS(y_1,\ldots,y_m) \in \bbN[y_1,\ldots,y_m]$.
Since the $\bP(x^i)$ are in $\fS$,  and since $\fS$ is closed under scalar 
multiplication by positive integers, addition and multiplication,  it follows that 
$M\cdot \sE_m\big(\bP(x)\big)$ is also in $\fS$.  
Since $\sE_m(\bP(x))\in \bbN[[x]]$, it follows that $\sE_m\big(\bP(x)\big)\in \fS$.
\end{proof}

With the help of the next lemma we will show that  the $\sE_{\ge m}$ map 
$\fS$ into $\RTOS$. 

%
\begin{lemma}\label{using gcd}
Given a power series $\bA(x)$ and $m$ a positive integer let
$$
	\bA_m(x) \:=\ \Big(\sum_{j=0}^{m-1} x^j\Big)\cdot\bA(x). 
$$
Then
\begin{thlist}
\item
$\bA_m(x) \in \RTN_1$\, implies 
$$
	a(n) - a(n-m)\ =\ \ro\Big(\sum_{j=0}^{m-1} a(n-j)\Big); 
$$
\item
if $\bA_{m_i}(x)\in\RTN_1$\, for $i=1,\ldots,k$\, and 
$d = \gcd\big(m_1,\ldots,m_k)$\, then
$$
	a(n) - a(n-d)\ =\ \ro\Big(\sum_{j=0}^u a(n-j)\Big), 
$$
for a suitable choice of $u$; 
\item
if $d=1$ in \mbox{\rm(b)} then $\bA(x)\in\RTN_1$. 
\end{thlist}
\end{lemma}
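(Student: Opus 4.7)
The plan is to treat the three parts in order: (a) is a direct unwinding of definitions; (b) combines (a) with a B\'ezout-plus-telescoping argument; and (c) invokes Lemma \ref{oSum} once both of its hypotheses are established. For (a), observe that $\bA_m(x) = (1 + x + \cdots + x^{m-1})\bA(x)$, so its $n$th coefficient is $a_m(n) := \sum_{j=0}^{m-1} a(n-j)$, and telescoping the window gives $a_m(n) - a_m(n-1) = a(n) - a(n-m)$. The hypothesis $\bA_m(x) \in \RTN_1$ says $a_m(n) \succ 0$ and $a_m(n) - a_m(n-1) = \ro(a_m(n))$, which is exactly the asserted estimate.

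For (b), I would first extend (a) by induction to any nonnegative integer combination $P = \sum_i c_i m_i$ of the $m_i$:
\[
a(n) - a(n-P) = \ro\Bigl(\sum_{j=0}^{P-1} a(n-j)\Bigr),
\]
the inductive step splitting $a(n) - a(n-P)$ at an intermediate shift $n-P'$ (with $P = P' + m_{i_0}$) and absorbing the shifted error sum $\sum_{j=0}^{m_{i_0}-1} a(n-P'-j) = \sum_{j=P'}^{P-1} a(n-j)$ into the combined window. By B\'ezout, write $d = P - Q$ with $P, Q$ nonnegative integer combinations of the $m_i$, and expand
\[
a(n) - a(n-d) = [a(n) - a(n-P)] - [a(n-d) - a(n-d-Q)],
\]
using $P = d + Q$. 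Both brackets are $\ro\bigl(\sum_{j=0}^{P-1} a(n-j)\bigr)$ by the extended estimate, so their difference is too, and we may take $u = P - 1$.

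For (c), part (b) with $d = 1$ already yields $a(n) - a(n-1) = \ro(\sum_{j=0}^u a(n-j))$, so only the eventual positivity of $a(n)$ remains before applying Lemma \ref{oSum}. Enlarging $u$ if necessary, assume $u \ge m_1 - 1$ and set $M_n := \max_{0 \le j \le u} a(n-j)$; then $\bA_{m_1}(x) \in \RTN_1$ forces $\sum_{j=0}^{m_1-1} a(n-j) \ge 1$ eventually, so $M_n \ge 1$ for large $n$. Chaining the single-step estimate across the window $[n-u, n]$ via the triangle inequality, and bounding the relevant shifted sums $\sum_j a(\cdot)$ uniformly by a constant multiple of $M_n$, shows $|a(n-j) - M_n| = \ro(M_n)$ for each $j \in [0, u]$. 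Hence $a(n) \ge M_n/2$ eventually; since $a(n)$ is a nonnegative integer, $a(n) \ge 1$ eventually. Lemma \ref{oSum} then delivers $\bA(x) \in \RT_1$, and since $\bA(x) \in \bbN[[x]]$, we conclude $\bA(x) \in \RTN_1$.

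The main obstacle I anticipate is the positivity argument in (c): the $\ro$-smallness of single-step differences must be propagated across a bounded window to prevent $a(n)$ from repeatedly dipping to zero, with care taken that the normalizing maximum $M_n$ does not distort as the window slides by bounded amounts. Parts (a) and the extension step in (b) are routine once one recognizes that $\sum_{j=0}^{P-1} a(n-j)$ is the right auxiliary sum for the B\'ezout collapse.
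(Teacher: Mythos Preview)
Your approach to (a) and (b) is essentially the paper's. The paper also telescopes via $a_m(n)-a_m(n-1)=a(n)-a(n-m)$ for (a), and for (b) it treats only $k=2$, writing $d=u_1-u_2$ with $u_i=|q_i|m_i$ and splitting $a(n)-a(n-d)$ as you do; your explicit inductive extension of (a) to nonnegative combinations $P=\sum c_i m_i$ is exactly what the paper uses implicitly when it applies the (a)-estimate with $u_1,u_2$ in place of $m_1,m_2$.

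For (c) the paper simply says ``immediate application of Lemma~\ref{oSum}'' and does not separately argue eventual positivity of $a(n)$, so your extra paragraph is doing work the paper leaves to the reader (or to the cited proof in \cite{BeBu2004}). Your idea is sound, but the sketch as written has a loose end: when you chain the single-step estimate $|a(n{-}\ell)-a(n{-}\ell{-}1)|=\ro(S(n{-}\ell))$ across $\ell\in[0,u]$, you need to know that the normalizers $S(n{-}\ell)=\sum_{j=0}^{u}a(n{-}\ell{-}j)$ are comparable to $S(n)$, and your ``bound the shifted sums by a constant multiple of $M_n$'' step does not yet give this (the shifted windows reach past $n-u$). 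A clean fix, entirely in the spirit of both proofs, is to note that one may choose $u$ so that $u+1=P$ is a \emph{multiple} of some $m_{i_0}$ (replace $P,Q$ by $P+L,\,Q+L$ for a suitable nonnegative combination $L$); then
\[
\bA_{u+1}(x)=\bigl(1+x^{m_{i_0}}+\cdots+x^{(P/m_{i_0}-1)m_{i_0}}\bigr)\,\bA_{m_{i_0}}(x)\in\RT_1
\]
by Corollary~\ref{schur cor}, so $S(n)\in\RT_1$ and $S(n{-}\ell)\sim S(n)$ for each fixed $\ell$. Now the chaining is legitimate, yielding $a(n)\ge S(n)/(u+1)-\ro(S(n))>0$ eventually, and Lemma~\ref{oSum} applies as you intended.
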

\begin{proof}
The definition of $\bA_m(x)$ gives
$$
	a_m(n)\ =\ \sum_{j=0}^{m-1} a(n-j); 
$$
so one has
$$
	a(n)- a(n-m)\ =\ a_m(n) - a_m(n-1). 
$$
Then $\bA_m(x)\in\RTN_1$ gives
$$
	a(n)-a(n-m)\ =\ \ro\big(a_m(n)\big);  
$$
so
$$
	a(n)-a(n-m)\ =\ \ro\Big(\sum_{j=0}^{m-1} a(n-j)\Big). 
$$
This proves (a).

We give details of the proof of (b) for the case $k=2$, the case we will need in the 
proof of Proposition \ref{combined prop2}; the general case is proved in a similar manner. 
From $d=\gcd(m_1,m_2)$ we know that for some integers $q_1,q_2$ 
we have $d=q_1m_1 + q_2m_2$. 
Let $u_i = |q_im_i|$.  We can assume $u_1\ge u_2$; so $d = u_1 - u_2$. 
Then for $n\ge u_1$
\begin{eqnarray*}
	a(n)-a(n-d)
&=&
	\big(a(n)-a(n-u_1)\big) + \big(a(n-u_1) - a(n-d)\big)\\
&=&
	\big(a(n)-a(n-u_1)\big) - \big(a(n-d) - a(n-d - u_2)\big)\\
&=& 
	\ro\Big(\sum_{j=0}^{u_1-1} a(n-j)\Big)\ +\  \ro\Big(\sum_{j=1}^{u_2-1} a(n-d-j)\Big)\\
&=& 
	\ro\Big(\sum_{j=0}^{u_1-1} a(n-j)\Big). 
\end{eqnarray*}
In this case the choice of $u$ for (b) is $u=u_1-1$. 

Item (c) is then an immediate application of Lemma \ref{oSum}.
\end{proof}

\begin{proposition} \label{combined prop2}
Suppose $\bP(x) \in \fS$. Then
$$
	\sE\big(\bP(x)\big)\,\in\, \RTOS.
$$
\end{proposition}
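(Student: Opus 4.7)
The plan is to reduce the problem to two manageable cases and combine them multiplicatively. By Lemma \ref{prop P2} we can write $\bP(x) = \bp_0(x) + \sum_{i=1}^{k} \bQ_i(x)$, with $\bp_0 \in x\,\bbN[x]$ and each $\bQ_i \in \fQ$. The identity $\exp(\bP_1^\star + \bP_2^\star) = \exp(\bP_1^\star)\exp(\bP_2^\star)$ converts addition in $\fS$ to Cauchy products:
\[
1 + \sE(\bP) \;=\; \bigl(1+\sE(\bp_0)\bigr) \prod_{i=1}^{k} \bigl(1+\sE(\bQ_i)\bigr).
\]
Since $\RTNOS$ is closed under Cauchy product (Lemma \ref{closure in RT}(c)), it suffices to show that each factor lies in $\RTNOS$. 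The polynomial factor is immediate: writing $\bp_0 = \sum_j a_j x^j$, we have $1+\sE(\bp_0) = \prod_j (1-x^j)^{-a_j}$, and each $(1-x^j)^{-a_j}$ has binomial coefficients supported on $j\,\bbN$ with ratios tending to $1$, hence lies in $\RTN_1[j] \subseteq \RTNOS$.

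For the principal case $\bQ \in \fQ$, let $\bA = \sE(\bQ)$. Differentiating $1+\bA = \exp(\bQ^\star)$ and multiplying by $x/(1-x)$ yields the key identity
\[
\frac{x\bA'(x)}{1-x} \;=\; \bigl(1+\bA(x)\bigr) \cdot \whbQ(x),
\]
whose left-hand side has coefficients equal to the weighted partial sums $B(N) := \sum_{n=1}^{N} n a(n)$. Lemma \ref{star lem2} supplies $\whbQ \in \RT_1$. Applying Lemma \ref{var on schur} to this Cauchy product---noting that $B$ is nondecreasing, so $B(N-1)/B(N) \le 1$, and that the coefficients of $1+\bA$ satisfy $\tilde a(N) \le B(N)/N = \ro(B(N))$---we conclude that the series with coefficients $B(N)$ lies in $\RT_1$.

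It remains to transfer this partial-sum regularity to $\bA$ itself. First, via the substitution $y = x^e$ with $e = \gcd(\fc,\fd)$, one can write $\bQ(x) = \bQ'(x^e)$ for some $\bQ' \in \fQ$ with coprime shift and period; the identity $\sE(\bQ)(x) = \sE(\bQ')(x^e)$ then reduces the task to showing $\sE(\bQ') \in \RT_1$. In that coprime case the coefficients of $\sE(\bQ')$ are eventually strictly positive, and one combines the recursion $n a(n) = c(n) + \sum_{j=1}^{n-1} a(j)\, c(n-j)$ (where $c(m) = m q^\star(m)$), the bound $c(n) = \ro(\whq(n))$ furnished by $\whbQ \in \RT_1$, and a splitting of the convolution at $n/M$ modeled on the proof of Lemma \ref{star lem2}, to derive an estimate of the form $a(n) - a(n-1) = \ro\bigl(\sum_{j=0}^{u} a(n-j)\bigr)$; Lemma \ref{oSum} then yields $\bA \in \RT_1$.

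The main obstacle is this last transfer. The preceding Schur-type step controls only the averaged growth encoded in $B(N)$; extracting the pointwise ratio $a(n-1)/a(n) \to 1$ requires a careful use of the convolution recursion in tandem with the coprimality reduction and the subexponential decay of $c(m)/\whq(m)$ forced by Lemma \ref{star lem2}.
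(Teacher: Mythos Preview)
Your reduction to the polynomial and $\fQ$-summand cases via multiplicativity matches the paper's, modulo a notational slip: $\sE(\bP)=\exp(\bP^\star)$ already has constant term $1$, so your identity should read $\sE(\bP)=\sE(\bp_0)\prod_i\sE(\bQ_i)$ and $x\bA'(x)/(1-x)=\bA(x)\cdot\whbQ(x)$, not $(1+\bA)\cdot\whbQ$. This does not affect the argument.

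The genuine gap is exactly the one you flag: passing from $B(N)=\sum_{n\le N} n a(n)\in\RT_1$ to $a(n)\in\RT_1$. Your sketch (``split the convolution at $n/M$ as in Lemma~\ref{star lem2}'') is not a proof, and I do not see how to make it one: the splitting in Lemma~\ref{star lem2} exploits the explicit divisor structure of $\whq(n)=\sum_m\lfloor n/m\rfloor mq(m)$, whereas the recursion $na(n)=\sum_j a(j)c(n-j)$ has no such structure, and knowing only $c(m)=\ro(\whq(m))$ gives no handle on $a(n)-a(n-1)$. The information $B(N)\in\RT_1$ is genuinely weaker than $a(n)\in\RT_1$ in general.

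The paper avoids this transfer entirely by a different device. Fixing $j$ large, it chooses a polynomial $\bp_j$ so that $\bU_j:=\exp(\bQ^\star+\bp_j)$ has the property that $(1-x^{\fc+j\fd})\bU_j(x)$ has \emph{nonnegative} coefficients (this is where the lower bound $q^\star(n)\ge (\fc+j\fd)/n$ on multiples of $\fc+j\fd$ is used). One then applies Lemma~\ref{var on schur} not to $x\bU_j'/(1-x)$ but to the \emph{finite} smoothing $(1+x+\cdots+x^{\fc+j\fd-1})\,x\bU_j'(x)$, with $\bB_j=(1-x^{\fc+j\fd})\bU_j$ and $\bC_j=\whbQ+x(1-x)^{-1}\bp_j'$. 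This yields $(1+\cdots+x^{\fc+j\fd-1})\,x\bU_j'\in\RT_1$. Repeating with $j+1$ (and the same $\bp$) and using $\gcd(\fc+j\fd,\fc+(j+1)\fd)=1$ together with Lemma~\ref{using gcd}(c) gives $x\bU'\in\RT_1$, hence $\bU\in\RT_1$, hence $\sE(\bQ)=\exp(-\bp)\cdot\bU\in\RT_1$ by Corollary~\ref{schur cor}. The point is that two \emph{finite} coprime-length smoothings, each in $\RT_1$, force the unsmoothed sequence into $\RT_1$ via Lemma~\ref{using gcd}; a single infinite smoothing (your $B(N)$) does not.
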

\begin{proof}
By Lemma \ref{prop P2}, $\bP(x)= \bp_0(x)+\sum \bQ_i(x)$ with $\bp_0(x)\in x \cdot \bbN[x]$ 
and the $\bQ_i(x)\in\fQ$. 
Since 
$$
	\sE\Big(\bp_0(x)+\sum \bQ_i(x)\Big) 
	\,=\,  \sE\big(\bp_0(x)\big)\cdot\prod\sE\big(\bQ_i(x)\big), 
$$
it suffices to show that $\sE(\bQ_i(x)\big)\in\RTOS$  since $\sE\big(\bp_0(x)\big)$
 is either 1 (if $p_0(x)=0$), or it is in $\RTOS$ by Theorem 2.48 of \cite{density}.

Let $\bQ(x):= x^\fc\bR\big(x^\fd\big)$ with $\bR(x)\in\RTN_1$. 
To show $\sE\big(\bQ(x)\big)\in\RTOS$ it suffices to consider the case $\gcd(\fc,\fd)=1$. 
As $r(j)\succeq 1$, it follows that $q(\fc + j \fd )\ge 1$ for $j$ sufficiently large, 
say for $j\ge M$.  

Choose any $j\ge M$. 
Then for $n$ such that $(\fc + j \fd ) \big| n$ we have
\begin{equation}\label{harmonic}
    q^\star(n)\ \ge\ \frac{\fc + j \fd }{n} \cdot q(\fc + j \fd )\ \ge \ 
    \frac{\fc + j\fd}{n}\ =\ -\big[ x^{n} \big] \log\big(1 - x^{\fc + j\fd }\big). 
\end{equation}
It follows that there exists a polynomial $\bp_j(x)$ with nonnegative coefficients such that
\begin{equation} \label{poly shift}
     \bQ^\star (x)\ +\ \bp_j(x)\  +\ \log \big(1-x^{\fc + j \fd }\big)
\end{equation}
has nonnegative coefficients. Then
\begin{equation}  \label{add poly}
   \bU_j(x)\:=\ \bigexpp{\bQ^\star (x)\,+\,\bp_j(x)}
\end{equation}
has nonnegative coefficients. 

We will use Lemma \ref{var on schur} with 
\begin{eqnarray*}
	\bA_j(x) &:=& \big(x+\cdots+x^{\fc + j \fd }\big)\cdot\bU_j'(x)\\
	\bB_j(x) &:=& (1-x^{\fc + j \fd })\cdot\bU_j(x)\\
	\bC_j(x) &:=& \whbQ(x)\ +\ x(1-x)^{-1}\bp_j'(x).
\end{eqnarray*}
By differentiating \eqref{add poly} and adjusting polynomial factors one has 
$\bA_j(x) = \bB_j(x)\cdot \bC_j(x)$. 
Since
$\bB_j(x)$ is the exponential of \eqref{poly shift}, 
$\bB_j(x)$ has nonnegative coefficients, and clearly 
$\bC_j(x)$ has nonnegative coefficients. 
Thus $\bA_j(x)$ also has nonnegative coefficients.

The definition of $\bA_j(x)$ says
$$
	a_j(n)\ =\ nu_j(n)\,+\,\cdots\,+\, (n-\fc - j \fd +1)\cdot u_j(n-\fc - j \fd +1),
$$
and from this follows
\begin{eqnarray*}
	a_j(n)\, -\, a_j(n-1) 
&=& 
	nu_j(n) \,-\,(n-\fc - j \fd )\cdot  u_j(n-\fc - j \fd )\\
&=& 
	n \Big( u_j(n) \,-\, u_j(n-\fc - j \fd ) \Big) \ +\ (\fc + j\fd) u_j(n-\fc - j \fd ).
\end{eqnarray*}
Since $\bB_j(x)$ has nonnegative coefficients, 
$u_j(n) - u_j(n - \fc -j\fd)= b(n) \ge 0$, and thus
$$
	a_j(n) - a_j(n-1)\ \ge \ 0.
$$
This shows condition (a) of Lemma \ref{var on schur} holds.

Since $\bB_j(x)$ and $\bU_j(x)$ have nonnegative coefficients,
$$
	0\ \le \ b_j(n)\ \le \ u_j(n) \ \le\  a_j(n)/n, 
$$
and thus $b_j(n) = \ro\big(a_j(n)\big)$. 
This gives condition (b) of Lemma \ref{var on schur}. 


For $n$ larger than the degree of $\bp_j(x)$, we have
\begin{eqnarray}
	c_j(n) 
&=&
	 [x^n]\Big(\whbQ(x)\  +\  x(1-x)^{-1}\cdot\bp_j'(x)\Big)\nonumber\\
&=&
	 \whq(n) \ +\ \bp_j'(1).  \label{eqp1}
\end{eqnarray}
From  \eqref{expans} and \eqref{harmonic} we have
$$
	\whq(n) \ =\ \sum_{m=1}^n mq^\star(m)\ \rightarrow\ 
	\infty\quad \text{as } n \rightarrow \infty,
$$
so (\ref{eqp1}) leads to
\begin{equation}\label{eqp3}
	c_j(n) \ \sim\  \whq(n). 
\end{equation}
By Lemma \ref{star lem2} and Proposition \ref{closure in RT}, 
$\bC_j(x)\in \RT_1$. This is condition (c) of Lemma \ref{var on schur}.

 Now Lemma \ref{var on schur} gives $\bA_j(x)\in \RT_1$, 
that is, 
$$
	\big(1+x+\cdots+x^{\fc + j \fd  -1}\big)\cdot x\bU_j'(x)\in\RT_1. 
$$
Likewise, as $j+1\ge M$, 
$$
	\big(1+x+\cdots+x^{\fc+(j+1)\fd -1}\big)\cdot x\bU_{j+1}'(x)\in\RT_1. 
$$
Without loss of generality we can assume that
	$\bp_j(x)=\bp_{j+1}(x)$;  
let us call this common polynomial $\bp(x)$.  
Then 
	$\bU_j(x) = \bU_{j+1}(x)$; 
let us call this power series simply $\bU(x)$. 

Since $\gcd\big(\fc + j \fd, \fc+(j+1)\fd\big)=1$,  by Lemma \ref{using gcd} (c) 
we have $x\bU'(x)\in \RT_1$. 
But then 
	$\bU(x)\in\RT_1$ 
by Proposition \ref{closure in RT}. From Corollary \ref{schur cor},
$$
	\bigexpp{-\bp(x)}\cdot \bU(x)\in \RT_1, 
$$
that is, 
	$\sE\big(\bQ(x)\big) = \exp\big(\bQ^\star(x)\big)\in\RT_1$,  
proving the proposition.
\end{proof}

\begin{proposition}\label{sE prop}
Suppose\, $\bP(x)\in\fS$. Then 
$$
	\sE_{\ge m} \big(\bP(x)\big) \in \RTOS \quad \text{for } m \ge 0.
$$
\end{proposition}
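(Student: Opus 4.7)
The plan begins with the decomposition
\[
\sE_{\ge m}(\bP(x)) \;=\; \sE(\bP(x)) \;-\; \sum_{j=0}^{m-1}\sE_j(\bP(x)).
\]
For $m=0$ the sum is empty and the statement is exactly Proposition \ref{combined prop2}. For $m\ge 1$, Proposition \ref{combined prop2} yields $\sE(\bP)\in\RT_1[d]$ for some $d\ge 1$, while Proposition \ref{Em lemma} places each $\sE_j(\bP)$ with $1\le j\le m-1$ in $\fS$; write $e(n):=[x^n]\sE(\bP)$.

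The crux is to establish, for each fixed $j$ with $1\le j\le m-1$, the dominance
\[
[x^n]\sE_j(\bP) \;=\; \ro\bigl(e(n)\bigr) \quad \text{as } n\to\infty \text{ through } d\cdot\bbN.
\]
Granting this, summing over the finitely many $j<m$ (together with $\sE_0(\bP)=1$) yields $[x^n]\sE_{\ge m}(\bP) \sim e(n)$ along $d\cdot\bbN$. The nonnegative integer sequence $[x^n]\sE_{\ge m}(\bP)$ is eventually positive along $d\cdot\bbN$ because $\bP\ne 0$ supplies arbitrarily large multisets with $\ge m$ components of $\cP$. Since $\RT_1$ is closed under asymptotic equality by Lemma \ref{closure in RT}(a), applying that closure to the $d$-subseries gives $\sE_{\ge m}(\bP) \in \RT_1[d] \subseteq \RTOS$.

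The main obstacle is the dominance estimate, and I would handle it by splitting on whether $\bP$ is a polynomial. If $\bP\in x\cdot\bbN[x]$, each $\bP(x^k)$ is a polynomial, so $\sE_j(\bP)$---a fixed polynomial expression in $\bP(x),\dots,\bP(x^j)$---is itself a polynomial of bounded degree, whereas $\sE(\bP)=\prod_{n\ge 1}(1-x^n)^{-p(n)}$ is rational with polynomially growing coefficients of degree $\bigl(\sum_n p(n)\bigr)-1$; thus $[x^n]\sE_j(\bP)=0$ for $n$ large and the estimate is trivial. If $\bP$ is non-polynomial then $\sum_k p(k)=\infty$, and the Newton-type recursion
\[
\ell\,\sE_\ell(\bP)\;=\;\sum_{k=1}^{\ell}\bP(x^k)\,\sE_{\ell-k}(\bP)
\]
gives $\sE_\ell(\bP)\ge \bP(x)\sE_{\ell-1}(\bP)/\ell$; iterating and summing over $\ell\ge 1$ produces a lower bound on $e(n)$ which, combined with the $\fS$-structure of $\sE_j(\bP)$---whose coefficients vary slowly along the relevant arithmetic progression since $\bR\in\RTN_1$ forces $r(n-1)/r(n)\to 1$---forces $e(n)/[x^n]\sE_j(\bP)\to\infty$. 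The combinatorial intuition is that a multiset totalling large $n$ must typically have unboundedly many components, so the contribution of bounded-size-$j$ multisets becomes asymptotically negligible compared to the full count $e(n)$.
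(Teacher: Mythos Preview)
Your overall architecture is exactly the paper's: decompose $\sE_{\ge m}(\bP)=\sE(\bP)-\sum_{j<m}\sE_j(\bP)$, invoke Proposition~\ref{combined prop2} for $\sE(\bP)$, show each $\sE_j(\bP)$ has coefficients that are $\ro\big(e(n)\big)$, and conclude by asymptotic equivalence. The paper first reduces to $\gcd\{n:p(n)>0\}=1$ (so $\sE(\bP)\in\RT_1$ and $\sE(\bP)(1)=\infty$) and then dispatches the dominance estimate $a_j(n)=\ro\big(a(n)\big)$ in one line by citing Lemma~3.55 of \cite{density}. That lemma is precisely the statement you are trying to reprove in your Case~2.

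The gap is in that Case~2 argument. The Newton recursion and the termwise bound $\sE_\ell(\bP)\ge \bP(x)\,\sE_{\ell-1}(\bP)/\ell$ are fine, but you never actually use them to compare $e(n)$ against $[x^n]\sE_j(\bP)$: iterating gives $\sE_\ell(\bP)\ge \bP(x)^\ell/\ell!$ and summing gives $e(n)\ge [x^n]\exp(\bP(x))$, but that lower bound is not visibly larger than $[x^n]\sE_j(\bP)$. Invoking the $\fS$-structure of $\sE_j(\bP)$ and saying its coefficients ``vary slowly'' does not bound their \emph{size}; $\RT_1$-type regularity controls ratios of consecutive coefficients, not growth rate, and $\sE_j(\bP)$ can have coefficients growing as fast as any fixed power of $e(n)^{1/k}$ for suitable inputs. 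The closing sentence is explicitly labelled intuition, not argument.

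So either cite the relevant external lemma (as the paper does), or supply a genuine proof that $a_j(n)=\ro\big(a(n)\big)$ whenever $\bA=\sE(\bP)\in\RT_1$ with $\bA(1)=\infty$. One clean route: pick $s$ with $p(s)>0$; multisets avoiding a fixed component of size $s$ have generating function $(1-x^s)\bA(x)$, whose $n$th coefficient is $a(n)-a(n-s)=\ro(a(n))$ by $\RT_1$; iterate to show that multisets containing fewer than $t$ copies of that component contribute $\ro(a(n))$ for every fixed $t$, and observe that any $j$-component multiset contains at most $j$ copies. Your polynomial Case~1 is correct as written.
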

\begin{proof}
Define
\begin{eqnarray*}
	\bA_m(x)
&:=&
	 \sE_m\big(\bP(x)\big)\\
	\bA(x)
&:=&
	 \sE\big(\bP(x)\big). 
\end{eqnarray*}
It suffices to consider the case that $\gcd\Big( n: p(n) > 0 \Big)= 1$,  in which 
case $\bA(x)$ has integer coefficients that are eventually positive. 
By Proposition \ref{combined prop2}, $\bA(x)  \in \RT_1 $; 
and $\bA(1) = \infty$ since $\bA(x)$ has eventually positive integer coefficients. 

Let $m$ be a fixed positive integer.
By Lemma 3.55, p.~69 of \cite{density}, $a_m(n) \ =\ \ro\big(a(n)\big)$.
Since
$$
	\bA_{\ge m} (x)\ =\ \bA(x) - \sum_{j=0}^{m-1} \bA_j(x),
$$
it follows that
$$
	a_{\ge m} (n)\ \sim\ a(n),
$$
showing that $\bA_{\ge m}(x)\in\RT_1$.
\end{proof}

The following proposition collects the main results concerning $\fS$.
%
%
\begin{proposition}\label{main RT results}
\begin{thlist}
\item
$x, \dfrac{x}{1-x^n} \in \fS$, for  $n\ge 1$. 
\item
$\fS$ is closed under  addition, Cauchy product, $\sE_m$ and $\sE_{\ge m}$, for $m\ge 1$.
\item
For $m\ge 0$ and $\bP(x)\in\fS$, one has
$\sE_{\ge m} \big(\bP(x)\big) \in \RTOS$. 
\end{thlist}
\end{proposition}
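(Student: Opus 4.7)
The plan is to assemble this proposition entirely from results already in hand—the substantive work has been done in the preceding subsections, and what remains is essentially an inventory. First I would invoke Lemma \ref{prop P}(c) to obtain (a) and to obtain closure of $\fS$ under addition and Cauchy product in (b). Closure of $\fS$ under $\sE_m$ for $m \ge 1$ in (b) is precisely Proposition \ref{Em lemma}.

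The only item in (b) that requires any assembly is closure of $\fS$ under $\sE_{\ge m}$ for $m \ge 1$. Given $\bP(x) \in \fS$, Proposition \ref{sE prop} delivers $\sE_{\ge m}(\bP(x)) \in \RTOS$, and since the P\'olya operator preserves $\bbN[[x]]$ the result lies in $\RTNOS$. Because $m \ge 1$ and $\bP(x) \in x \cdot \bbN[[x]]$, the constant term of $\sE_{\ge m}(\bP(x))$ vanishes. Unfolding the definition of $\RTNOS$, I would write $\sE_{\ge m}(\bP(x)) = \bR(x^\fd)$ for some $\fd \ge 1$ and some $\bR(x) \in \RTN_1$; the vanishing constant term forces $\bR(0) = 0$, so $\bR(x) = x \bR_1(x)$ with $\bR_1 \in \bbN[[x]]$, and Lemma \ref{closure in RT}(a) gives $\bR_1 \in \RT_1$, hence $\bR_1 \in \RTN_1$. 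Then $\sE_{\ge m}(\bP(x)) = x^\fd \bR_1(x^\fd)$ fits directly into the defining form of $\fS$ (take $\bp_0 = 0$, $\bp_1(x) = x^\fd$, $\bR_{1} = \bR_1$, $\fd_1 = \fd$).

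Finally, part (c) for $m \ge 1$ is Proposition \ref{sE prop}, and the remaining case $m = 0$ (i.e., $\sE\big(\bP(x)\big) \in \RTOS$) is Proposition \ref{combined prop2}. I do not foresee any obstacle here: the genuine difficulty—the Tauberian estimate controlling $\whbQ(x)$ in Lemma \ref{star lem2} and its deployment inside Proposition \ref{combined prop2}—has already been discharged. This proposition merely packages the three pieces into a clean interface for use in the proof of the main theorem.
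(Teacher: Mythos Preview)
Your proposal is correct and matches the paper's approach: the paper presents this proposition as a collection of prior results with no proof given, and you have correctly identified Lemma \ref{prop P}(c), Proposition \ref{Em lemma}, Proposition \ref{combined prop2}, and Proposition \ref{sE prop} as the relevant ingredients. In fact you do slightly more than the paper does explicitly: the paper announces in \S4.5 that $\sE_{\ge m}$ maps $\fS$ into $\fS$ but never writes out the short argument, whereas you supply it by extracting $\sE_{\ge m}(\bP(x)) = x^{\fd}\bR_1(x^{\fd})$ from $\RTNOS$ and matching it against the defining form \eqref{PS def} of $\fS$.
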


\section{The Main Result}

\begin{theorem} \label{main thm}
Let $\cF$ be an adequate MSO-class of forests, say $\cF = (\ge 1)(\cT)$.
Then $\cF$ has a MSO 0--1 law iff the radius of $\cF$ is 1 iff the radius of 
$\cT$ is $\ge 1$.
\end{theorem}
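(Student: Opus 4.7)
The plan is to establish a cycle of implications among the three conditions. Write (1) for ``$\cF$ has a MSO 0--1 law'', (2) for ``$\rho_\bF=1$'', and (3) for ``$\rho_\bT \ge 1$'', and prove $(1) \Rightarrow (2) \Rightarrow (3) \Rightarrow (1)$.

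For $(1) \Rightarrow (2)$: invoke Theorem \ref{Compton's thm}(a). Any MSO 0--1 law is in particular a first-order 0--1 law, so $\rho_\bF \in \{0,1\}$. Since every class of forests satisfies $\rho_\bF > 0$ (as noted in the Preliminaries), we conclude $\rho_\bF = 1$.

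For $(2) \Rightarrow (3)$: each tree is a one-component forest, so $\cT \subseteq \cF$ and hence $t(n) \le f(n)$ for every $n$. Therefore $\rho_\bT \ge \rho_\bF = 1$.

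For $(3) \Rightarrow (1)$: this is where the machinery of the paper is deployed. Because $\cF$ is an MSO-class, so is $\cT$, and Corollary \ref{in GEN}(a) combined with $\rho_\bT \ge 1$ yields $\bT(x) \in \fG$. Next one observes $\fG \subseteq \fS$: parts (a) and (b) of Proposition \ref{main RT results} state exactly that the generators $x$ and $x/(1-x^m)$ of $\fG$ lie in $\fS$ and that $\fS$ is closed under the operations $+$, $\times$, $\sE_m$, and $\sE_{\ge m}$ used to build $\fG$ from those generators. Hence $\bT(x) \in \fS$. Since $\cF = (\ge 1)\cT$, the counting identity \eqref{gamma count} gives $\bF(x) = \sE_{\ge 1}\bigl(\bT(x)\bigr)$. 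Now Proposition \ref{main RT results}(c) places $\bF(x) \in \RTOS$, which is precisely Compton's condition \eqref{Compton's test} for the class $\cF$. Corollary \ref{forest thm} then delivers the MSO 0--1 law.

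The hard work has already been done in the preceding sections: the implication $(3) \Rightarrow (1)$ rests on the nontrivial containment $\sE_{\ge m}(\fS) \subseteq \RTOS$ of Proposition \ref{sE prop}, whose proof required the Tauberian analysis of $\whbQ(x)$ in Lemma \ref{star lem2} together with the Schur-type argument in Proposition \ref{combined prop2}. Once those are in hand, the main theorem is essentially a chase through the definitions linking the structural description of $\cT$, the class $\fS$ of generating functions, and Compton's ratio test.
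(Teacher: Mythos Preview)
Your proof is correct and follows essentially the same route as the paper's: both arguments hinge on Corollary~\ref{in GEN} to place $\bT(x)\in\fG\subseteq\fS$ when $\rho_\bT\ge 1$, and then on Proposition~\ref{main RT results}(c) to obtain $\bF(x)\in\RTOS$, after which Corollary~\ref{forest thm} closes the loop. The only cosmetic differences are that you organize the argument as an explicit cycle $(1)\Rightarrow(2)\Rightarrow(3)\Rightarrow(1)$ and invoke Theorem~\ref{Compton's thm}(a) directly for $(1)\Rightarrow(2)$, whereas the paper passes through the equivalence with $\RTOS$; and you write $\bF(x)=\sE_{\ge 1}\bigl(\bT(x)\bigr)$ where the paper writes $\sE\bigl(\bT(x)\bigr)$, but these differ only by the constant term and Proposition~\ref{main RT results}(c) covers both.
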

\begin{proof}
Since the radius of the class of all trees is positive, we know that the radius 
of $\cF$ must be positive.
 Then, from  Corollary \ref{forest thm}, $\cF$ has a MSO 0--1 law 
 iff $\bF(x) \in\RTOS$. 
$\bF(x)\in\RTOS$ implies $\rho_\bF = 1$, and this implies $\rho_\bT \ge 1$. 
By Corollary \ref{in GEN}, $\rho_\bT \ge 1$ implies $\bT(x)\in\fG$, and then 
Proposition \ref{main RT results} shows that 
$\bF(x) = \sE\big(\bT(x)\big) \subseteq \RTOS$. 
Thus $\bF\in\RTOS$ iff $\rho_\bF=1$ iff $\rho_\bT \ge 1$.
\end{proof}

\begin{remark}
The main theorem, using essentially the same proof, holds in the more general 
setting of forests with finitely many unary predicates.
 \end{remark}
 
 In the study of spectra, one finds that the periodicity results for MSO classes of 
 trees lift to the setting of MSO classes of unary functions (viewed as functional 
 digraphs). 
 This leads to the natural query:
 
 \begin{problem}
 Does every adequate MSO-class $\cU$ of unary functions (with finitely many 
 unary predicates) of radius 1 have a MSO 0--1 law?
 \end{problem}

\section{Appendix: Monadic Second Order Logic}   \label{MSO sec}

The details of MSO logic for the single binary relation symbol $\lle$ given in this 
section are based on the presentation in  Chap.~6 of \cite{density}.
In addition to the symbols $\lle$ and $=$ we have:
\begin{itemize}
\item
symbols for {\em propositional connectives}, say $\neg$ (not), $\wedge$ (and), 
$\vee$ (or), $\rightarrow$ (implies),  $\leftrightarrow$ (iff);  
\item
the {\em quantifier\/} symbols $\univ$ (for all) and $\exist$ (there exists); 
\item
a set $\cX$ of {\em first order variables}; 
\item
a set $\cU$ of {\em monadic second order variables}.
\end{itemize}

The {\em  MSO-formulas} are defined as follows, by induction:
\begin{itemize}
\item 
the {\em atomic formulas} are expressions of the form
\subitem
$x\lle y$, \, $x=y$,   and $U(x)$; 
\item
if $\varphi$ and $\Psi$ are MSO-formulas then so are
$$
	(\neg\,\varphi)\quad (\varphi\vee\Psi) \quad (\varphi\wedge \Psi) 
	\quad (\varphi\rightarrow \Psi) \quad (\varphi \leftrightarrow \Psi); 
$$
\item
if $\varphi$ is a MSO-formula then so are $(\univ x\,\varphi)$, $(\exist x\,\varphi)$, 
$(\univ U\,\varphi)$ and $(\exist U\,\varphi)$.
\end{itemize}
The \emph{MSO-sentences} are the MSO-formulas with no free occurrences 
of variables.
A \emph{MSO-class} is the class of \emph{finite} models of a MSO-sentence.  
For $\fq \ge 0$, the $\MSO^\fq$-classes are the MSO classes defined by a 
MSO-sentence of quantifier rank at most $\fq$.

\end{document}